\global\let\AddToReset=\@addtoreset}
\newtheorem{lemma}{\bf Lemma}[section]
\newtheorem{property}{Property}[section]
\newtheorem{@definition}{\sc Definition}[section]
\newtheorem{@remark}{\sc Remark}[section]
\newtheorem{@example}{\sc Example}[section]
\newcommand{\beqn}{\begin{displaymath}}
\newcommand{\eeqn}{\end{displaymath}}
\newcommand{\beq}{\begin{equation}}  % numbered (single equation)
\newcommand{\eeq}{\end{equation}}
\def\mathsf{\bf}
\def\N{\mathbb{N}}
\def\R{\mathbb{R}}
\def\Z{\mathbb{Z}}
\def\E{\mathrm E}
\def\text{\mbox}
\def\1{{\bf 1}}
\newcommand{\Cov}{\mbox{Cov}}
\newcommand{\Loi}{\mathcal{L}}
\def\limiteloiN{\renewcommand{\arraystretch}{0.5}
\begin{array}[t]{c}
\stackrel{{\Loi}}{\longrightarrow} \\
{\scriptstyle N\rightarrow\infty}
\end{array}\renewcommand{\arraystretch}{1}}
\def\limiteloiNm{\renewcommand{\arraystretch}{0.5}
\begin{array}[t]{c}
\stackrel{{\Loi}}{\longrightarrow} \\
{\scriptstyle [N/m]\wedge m \rightarrow\infty}
\end{array}\renewcommand{\arraystretch}{1}}
\def\limiteprobaN{\renewcommand{\arraystretch}{0.5}
\begin{array}[t]{c}
\stackrel{{\cal P}}{\longrightarrow} \\
{\scriptstyle N \rightarrow\infty}
\end{array}\renewcommand{\arraystretch}{1}}
\def\limitem{\renewcommand{\arraystretch}{0.5}
\begin{array}[t]{c}
\stackrel{}{\longrightarrow} \\
{\scriptstyle m\rightarrow\infty}
\end{array}\renewcommand{\arraystretch}{1}}
\def\limiteNm{\renewcommand{\arraystretch}{0.5}
\begin{array}[t]{c}
\stackrel{}{\longrightarrow} \\
{\scriptstyle N,~m,~\frac N m \rightarrow\infty}
\end{array}\renewcommand{\arraystretch}{1}}
\def\limiteN{\renewcommand{\arraystretch}{0.5}
\begin{array}[t]{c}
\stackrel{}{\longrightarrow} \\
{\scriptstyle N\rightarrow\infty}
\end{array}\renewcommand{\arraystretch}{1}}
\def\limitet{\renewcommand{\arraystretch}{0.5}
\begin{array}[t]{c}
\stackrel{}{\longrightarrow} \\
{\scriptstyle t\rightarrow\infty}
\end{array}\renewcommand{\arraystretch}{1}}
\def\limitet0{\renewcommand{\arraystretch}{0.5}
\begin{array}[t]{c}
\stackrel{}{\longrightarrow} \\
{\scriptstyle t\rightarrow 0}
\end{array}\renewcommand{\arraystretch}{1}}
\newtheorem{thm}{Theorem}
\newtheorem{rem}{Remark}
\newtheorem{prop}{Proposition}
\def\Cov{\mathrm{Cov}}
\begin{document}
%\begin{opening}

\title{\bf Adaptive estimator of the memory parameter and goodness-of-fit test using a multidimensional increment ratio statistic}

\author{\centerline{Jean-Marc Bardet and Béchir Dola} \\
\small {\tt bardet@univ-paris1.fr}~~,~~ \small {\tt bechir.dola@malix.univ-paris1.fr}\\
~\\
SAMM, Université Panthéon-Sorbonne (Paris I), 90 rue de Tolbiac, 75013 Paris, FRANCE}
%\runningauthor{J.M. Bardet, D. Surgailis} \runningtitle{Roughness of random paths by increment ratios}
\maketitle
\begin{abstract}
The increment ratio (IR) statistic was first defined and studied in Surgailis {\it et al.} (2007) for estimating the memory parameter either of a stationary or an increment stationary Gaussian process. Here three extensions are proposed in the case of stationary processes. Firstly, a multidimensional central limit theorem is established for a vector composed by several IR statistics. Secondly, a goodness-of-fit $\chi^2$-type test can be deduced from this theorem. Finally, this theorem allows to construct adaptive versions of the estimator and test which  are studied in a general semiparametric frame. The adaptive estimator of the long-memory parameter is proved to follow an oracle property. Simulations attest of the interesting accuracies and robustness of the estimator and test, even in the non Gaussian case.
\end{abstract}
\begin{quote}
{\em Keywords:} {\small Long-memory Gaussian processes; goodness-of-fit test; estimation of the memory parameter; minimax adaptive estimator.}
\end{quote}

\vskip1cm

\section{Introduction}
After almost thirty years of intensive and numerous studies, the long-memory processes form now an important topic of the time series study (see for instance the book edited by Doukhan {\it et al}, 2003). The most famous long-memory stationary time series are the fractional Gaussian noises (fGn) with Hurst parameter $H$ and FARIMA$(p,d,q)$ processes. For both these time series, the spectral density $f$ in $0$  follows a power law: $f(\lambda) \sim C\, \lambda^{-2d}$ where $H=d+1/2$ in the case of the fGn. In the case of long memory process $d \in (0,1/2)$ but a natural expansion to $d \in (-1/2,0]$ (short memory) implied that $d$ can be considered more generally as a memory parameter. \\
There are a lot of statistical results relative to the estimation of this memory parameter $d$. First and main results in this direction have been obtained for parametric models with the essential articles of Fox and Taqqu (1986) and Dahlhaus (1989) for Gaussian time series, Giraitis and Surgailis (1990) for linear processes and Giraitis and Taqqu (1999) for non linear functions of Gaussian processes.  \\
However parametric estimators are not really robust and can induce no consistent estimations. Thus, the research  is now rather focused on semiparametric estimators of the memory parameter. Different approaches were considered: the famous R/S statistic (see Hurst, 1951), the log-periodogram estimator (studied firstly by Geweke and Porter-Hudack, 1983, notably improved by Robinson, 1995a, and Moulines and Soulier, 2003), the local Whittle estimator (see Robinson, 1995b) or the wavelet based estimator (see Veitch {\it et al}, 2003, Moulines {\it et al}, 2007 or Bardet {\it et al}, 2008). All these estimators require the choice of an auxiliary parameter (frequency bandwidth, scales, etc.) but adaptive versions of these estimators are generally built for avoiding this choice. In a general semiparametric frame, Giraitis {\it et al} (1997) obtained the asymptotic lower bound for the minimax risk in the estimation of $d$, expressed as a function of the second order parameter of the spectral density expansion around $0$. Several adaptive semiparametric estimators are proved to follow an oracle property up to multiplicative logarithm term. But simulations (see for instance Bardet {\it et al}, 2003 or 2008) show that the most accurate estimators are local Whittle, global log-periodogram and wavelet based estimators. \\
~\\
In this paper, we consider the IR (Increment Ratio) estimator of long-memory parameter (see its definition in the next Section) for Gaussian time series recently  introduced in Surgailis {\it et al.} (2007) and we propose three extensions. Firstly, a multivariate central limit theorem is established for a vector of IR statistics with different ``windows'' (see Section \ref{Sec2}) and this induces to consider a pseudo-generalized least square estimator of the parameter $d$. Secondly, this multivariate result allows us to define an adaptive estimator of the memory parameter $d$ based on IR statistics: an ``optimal'' window is automatically computed (see {Section} {\ref{Adapt}}). This notably improves the results of Surgailis {\it et al.} (2007) in which the choice of $m$ is either theoretical (and cannot be applied to data) or guided by empirical rules without justifications. Thirdly, an adaptive goodness-of-fit test is deduced and its convergence to a chi-square distribution is established (see Section {\ref{Adapt}}).\\ %This also allows us to propose a test of long-memory: it consists in a test on the value of $d$ ($0<d<0.5$) obtained from the CLT satisfied by our adaptive IR estimator associated to this goodness-of-fit test.    \\
In Section \ref{Simu}, several Monte Carlo simulations are realized for optimizing the adaptive estimator and exhibiting the theoretical results. Then some numerical comparisons are made with the $3$ semiparametric estimators previously mentioned (local Whittle, global log-periodogram and wavelet based estimators) and the results are even better than the theory seems to indicate: as well in terms of convergence rate than in terms of robustness (notably in case of trend or seasonal component), the adaptive IR estimator and goodness-of-fit test provide efficient results. Finally, all the proofs are grouped in {Section} {\ref{proofs}}.

\section{The multidimensional increment ratio statistic and its statistical applications} \label{Sec2}
Let $X=(X_k)_{k\in \N}$ be a Gaussian time series satisfying the following Assumption $S(d,\beta)$:\\
~\\
{\bf Assumption $S(d,\beta)$:~} {\em There exist $\varepsilon > 0$, $c_0>0$, $c'_0>0$  and $c_1 \in \R$ such that $X=(X_t)_{t\in \Z}$ is a stationary Gaussian time series having a spectral density $f$ satisfying for all $\lambda \in (-\pi,0)\cup (0,\pi)$}
\begin{eqnarray}\label{AssumptionS}
f(\lambda)= c_{0}|\lambda|^{-2d}+c_{1}|\lambda|^{-2d+\beta} +O\big(|\lambda|^{-2d+\beta+\varepsilon}\big)
\quad \mbox{and}\quad |f'(\lambda)|\leq  c'_0 \, \lambda^{-2d-1}.
\end{eqnarray}
\begin{rem}
Note that here we only consider the case of stationary processes. However, as it was already done in Surgailis {\em et al.} (2007), it could be possible, {\em mutatis mutandis}, to extend our results to the case of processes having stationary increments. %A forthcoming paper will be devoted to this extension and to its application to a test of stationarity of the process.
\end{rem}
\noindent Let $(X_1,\cdots,X_N)$ be a path of $X$. For $m\in \N^*$, define the random variable $IR_{N}(m)$ such as
$$
IR_N(m):=\frac{1}{N-3m}\sum_{k=0}^{N-3m-1}\frac{|(\sum_{t=k+1}^{k+m}X_{t+m}-\sum_{t=k+1}^{k+m}X_{t})+(\sum_{t=k+m+1}^{k+2m}X_{t+m}-\sum_{t=k+m+1}^{k+2m}X_{t})|}{|(\sum_{t=k+1}^{k+m}X_{t+m}-\sum_{t=k+1}^{k+m}X_{t})|+|(\sum_{t=k+m+1}^{k+2m}X_{t+m}-\sum_{t=k+m+1}^{k+2m}X_{t})|}.
$$
%Its Covariance Matrix $\Sigma^{(p)}(d)$ is given by :
%\newpage
%$\Sigma^{(p)}(d)=~~(~N/m~Cov(IR_N(m_j),IR_{j'})_{1\leq j\leq j'\leq p}
%Each element $(\Sigma^{(p)}(d))_{j,j'}$ is defined as :
%\\
%$$Cov(IR_N(m_j),IR_{j'})~~=~~Cov([\frac{1}{N-3m_{j}}\sum_{k=0}^{N-3m_{j}-1}~~\eta_j(k)]~,~[\frac{1}{N-3m_{j'}}\sum_{k'=0}^{N-3m_{j'}-1}~~\eta_{m_{j'}}(k')])$$
%\\
%The diagonals elements of which , defined for $j=j'$ , are simply variances :
%$$Var(IR_N(m_j))~=~Cov(IR_N(m_j),IR_N(m_j))~=~\frac{1}{(N-3m_{j})^{2}}\sum_{k,k'=0}^{N-3m_{j}-1}~Cov~(~\eta_j(k)~,~\eta_j(k'))~~;~~~\forall~j=1,\cdots.p~~,~~$$
%Each $Var(IR_N(m_j))$ is strictly identical to which given by the la relation (5.7) of the article of Surgailis et Al , obtained by a direct calculus of their (5.5) relation  , which gives their (5.21) relation , and in the same conditions of theirs : \\ N , m , k $\rightarrow \infty$ to its limit à as defined by their (1.8) et (1.9), as :
From Surgailis {\it et al.} (2007), with $m$ such that $N/m \to \infty$ and $m\to \infty$,
$$
\sqrt{\frac{N}{m}}\big (IR_N(m)-\E IR_N(m)\big )\limiteloiN {\cal N}(0,\sigma^{2}(d)),
$$
where
\begin{eqnarray}
\label{sigma}
\sigma^{2}(d)&:=&2\int_{0}^{\infty}\Cov \Big (\frac{|Z_{d}(0)+Z_{d}(1)|}{|Z_{d}(0)|+|Z_{d}(1)|}\,,\, \frac{|Z_{d}(\tau)+Z_{d}(\tau+1)|}{|Z_{d}(\tau)|+|Z_{d}(\tau+1)|}\Big )d\tau\\
\label{Zd}
\mbox{and}\quad  Z_{d}(\tau)&:=& \frac{1}{\sqrt{|4^{d+0.5}-4|}} \big ( B_{d+0.5}(\tau+2)-2\, B_{d+0.5}(\tau+1)+B_{d+0.5}(\tau) \big )
\end{eqnarray}
with $B_H$ a standardized fractional Brownian motion (FBM) with Hurst parameter $H\in (0,1)$. 
\begin{rem}
This convergence was obtained for Gaussian processes in Surgailis {\it et al.} (2007), but there  also exist results concerning a modified IR statistic applied to stable processes (see Vaiciulis, 2009) with a different kind of limit theorem. We may suspect that it is also possible to extend the previous central limit theorem to long memory linear processes (since a Donsker type theorem with FBM as limit was proved for long memory linear processes, see for instance Ho and Hsing, 1997) but such result requires to prove a non obvious central limit theorem for a functional of multidimensional linear process. Surgailis {\it et al.} (2007) also considered the case of i.i.d.r.v. in the
domain of attraction of a stable law with index $0<\alpha<2 $ and skewness parameter $-1\leq \beta \leq  1$  and concluded that $IR_N(m)$ converges to almost the same limit. Finally, in Bardet and Surgailis (2011) a ``continuous'' version of the IR statistic is considered for several kind of continuous time processes (Gaussian processes, diffusions and Lévy processes). 
\end{rem}
\noindent Now, instead of this univariate IR statistic, define a multivariate IR statistic as follows: let $m_{j}=j\, m,~j=~1,\cdots,p$ with $2\leq p [N/m]-4$, and define the random vector $(IR_{N}(j \, m))_{1\leq j\leq p}$. Thus, $p$ is the number of considered window lengths of this multivariate statistic. In the sequel we naturally extend the results obtained for $m\in \N^*$ to $m\in (0,\infty)$ by the convention: $(IR_{N}(j \, m))_{1\leq j\leq p}=(IR_{N}(j \, [m]))_{1\leq j\leq p}$ (which change nothing to the asymptotic results). \\
We can establish a multidimensional central limit theorem satisfied by $(IR_{N}(j \, m))_{1\leq j\leq p}$:
\begin{property}\label{MCLT}
Assume that Assumption $S(d,\beta)$ holds with $-0.5<d<0.5$ and $\beta>0$. Then
\begin{eqnarray}\label{TLC1}
\sqrt{\frac{N}{m}}\Big (IR_N(j \, m)-\E \big [IR_N(j \, m)\big ]\Big )_{1\leq j \leq p}\limiteloiNm {\cal N}(0, \Gamma_p(d))
\end{eqnarray}
with $\Gamma_p(d)=(\sigma_{i,j}(d))_{1\leq i,j\leq p}$ where for $t\in \R$
\begin{multline}\label{ssigma}
\sigma_{i,j}(d):=\int_{-\infty}^{\infty}\Cov \Big (\frac{|Z^{(i)}_{d}(0)+Z^{(i)}_{d}(i)|}{|Z^{(i)}_{d}(0)|+|Z^{(i)}_{d}(i)|}\,,\, \frac{|Z^{(j)}_{d}(\tau)+Z^{(j)}_{d}(\tau+j)|}{|Z^{(j)}_{d}(\tau)|+|Z^{(j)}_{d}(\tau+j)|}\Big )d\tau\\
\mbox{and}\quad Z_{d}^{(j)}(\tau)=\frac 1 {\sqrt{|4^{d+0.5}-4|}} \,  \big ( B_{d+0.5}(\tau+2j)-2B_{d+0.5}(\tau+j)+B_{d+0.5}(\tau) \big ).
\end{multline}
\end{property}
\noindent The proof of this property as well as all the other proofs are given in {Appendix}. Moreover we will assume in the sequel that $\Gamma_p(d)$ is a definite positive matrix for all $d \in (-0.5,0.5)$.\\
\begin{rem}
Note that Assumption $S(d,\beta)$ are a little stronger than the conditions required in Surgailis {\it et al.} (2007) where $f$ is supposed to satisfy $f(\lambda)=c_0|\lambda|^{-2d} + O(|\lambda|^{-2d+\beta})$ and $|f'(\lambda)|\leq  c'_0 \, \lambda^{-2d-1}$. Note that Property \ref{MCLT} and following Theorem \ref{cltnada} and Proposition \ref{testnada} are as well checked under these assumptions of Surgailis {\it et al.} (2007) even if $\beta\geq 2d+1$ (case which is not consider in their Theorem 2.4). However our automatic procedure for choosing an adaptive scale $\widetilde m_N$ requires to specify the second order of the expansion of $f$ and we prefer to already give results under such assumption.
\end{rem}
\noindent As in Surgailis {\it et al.} (2007), for $r \in (-1,1)$, define the function $\Lambda(r)$ by
\begin{eqnarray}\label{Lambdad}
\Lambda(r):=\frac{2}{\pi}\, \arctan\sqrt{\frac{1+r}{1-r}}+\frac{1}{\pi}\, \sqrt{\frac{1+r}{1-r}}\log(\frac{2}{1+r}).
%\Lambda(r)'=\frac{1}{\pi\sqrt{(1+r)(1-r)^{3}}}ln(\frac{2}{1+r})\\
%r'(d)=\frac{d(r)}{d(d)}\\
\end{eqnarray}
and for $d \in (-0.5,1.5)$ let
\begin{eqnarray}\label{Lambda0d}
\Lambda_0(d)&:=&\Lambda(\rho(d))\quad \mbox{where} \quad \rho(d):=\frac{4^{d+1.5}-9^{d+0.5}-7}{2(4-4^{d+0.5})}.
\end{eqnarray}
The function $d \in (-0.5,1.5) \to \Lambda_0(d)$ is a ${\cal C}^\infty$ increasing function. Now, Property \ref{devEIR} (see in Section \ref{proofs}) provides the asymptotic behavior of $\E [IR(m)]$ when $m\to \infty$, which is  $\E [IR(m)]\sim \Lambda_0(d)+C m^{-\beta}$ if $\beta<2d+1$, $\E [IR(m)]\sim \Lambda_0(d)+C m^{-\beta}\log m$ if $\beta=2d+1$ and $\E [IR(m)]\sim \Lambda_0(d)+ O(m^{-(2d+1)})$ if $\beta>2d+1$ ($C$ is a non vanishing real number depending on $d$ and $\beta$). Therefore by choosing $m$ and $N$ such as $\big (\sqrt {N/m}\big ) m^{-\beta}\to 0$, $\big (\sqrt {N/m} \big )m^{-\beta}\log m \to 0$ and $\big (\sqrt {N/m}\big ) m^{-(2\beta+1)}\to 0$ (respectively) when $m,N\to 
\infty$, the term $\E [IR(jm)]$ can be replaced by $\Lambda_0(d)$ in Property \ref{MCLT}. Then, using the Delta-method with function $(x_i)_{1\leq i \leq p} \mapsto (\Lambda^{-1}_0(x_i))_{1\leq i \leq p}$, we obtain:
\begin{thm}\label{cltnada}
Let $\widehat d_N(j \, m):=\Lambda_0^{-1}\big (IR_N(j \, m)\big )$ for $1\leq j \leq p$. Assume that Assumption $S(d,\beta)$ holds with $-0.5<d<0.5$ and $\beta>0$. Then if $m \sim C\, N^\alpha$ with $C>0$ and $(1+2\beta)^{-1}\vee (4d+3)^{-1}<\alpha<1$ then
\begin{eqnarray}\label{cltd}
\sqrt{\frac{N}{m}}\Big (\widehat d_N(j \, m)-d\Big )_{1\leq j \leq p}\limiteloiN {\cal N}\Big(0,(\Lambda'_0(d))^{-2}\, \Gamma_p(d)\Big ).
\end{eqnarray}
\end{thm}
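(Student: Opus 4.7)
The strategy is essentially dictated by the preceding discussion: combine the multidimensional CLT of Property \ref{MCLT} with a bias control coming from the expansion of $\E[IR(m)]$, then apply the Delta method componentwise to $\Lambda_0^{-1}$.

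First I would start from Property \ref{MCLT}, which under Assumption $S(d,\beta)$ with $-0.5<d<0.5$ yields
$$
\sqrt{\frac{N}{m}}\Big (IR_N(j\,m)-\E\big[IR_N(j\,m)\big]\Big )_{1\leq j\leq p}\limiteloiN {\cal N}\big(0,\Gamma_p(d)\big),
$$
provided $m\to\infty$ and $N/m\to\infty$, both of which are ensured by $m\sim C\,N^{\alpha}$ with $0<\alpha<1$. It then remains to replace the centering $\E[IR_N(j\,m)]$ by $\Lambda_0(d)$ and to transfer the CLT through $\Lambda_0^{-1}$.

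For the centering, I would invoke the expansion of $\E[IR(m)]$ recalled just before the theorem (and which is the content of Property \ref{devEIR}): for each fixed $j\in\{1,\ldots,p\}$,
\begin{equation*}
\E\big[IR_N(j\,m)\big]-\Lambda_0(d)=
\begin{cases}
O(m^{-\beta}) & \text{if }\beta<2d+1,\\
O(m^{-\beta}\log m) & \text{if }\beta=2d+1,\\
O(m^{-(2d+1)}) & \text{if }\beta>2d+1,
\end{cases}
\end{equation*}
the multiplicative constants depending on $j$, $d$ and $\beta$ but not on $N$. The condition $m\sim C\,N^{\alpha}$ with $\alpha>(1+2\beta)^{-1}\vee(4d+3)^{-1}$ is precisely what is needed to guarantee that $\sqrt{N/m}\,(\E[IR_N(jm)]-\Lambda_0(d))\to 0$ in all three regimes: in the first two, $\sqrt{N/m}\,m^{-\beta}=N^{1/2}m^{-\beta-1/2}\sim C' N^{1/2-\alpha(\beta+1/2)}\to 0$ as soon as $\alpha>(1+2\beta)^{-1}$ (the logarithmic factor being harmless); in the third, $\sqrt{N/m}\,m^{-(2d+1)}$ tends to $0$ as soon as $\alpha>(4d+3)^{-1}$. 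Consequently Slutsky's lemma upgrades the previous CLT to
$$
\sqrt{\frac{N}{m}}\Big (IR_N(j\,m)-\Lambda_0(d)\Big )_{1\leq j\leq p}\limiteloiN {\cal N}\big(0,\Gamma_p(d)\big).
$$

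Finally I would apply the multivariate Delta method to the map $g:(x_1,\ldots,x_p)\mapsto\bigl(\Lambda_0^{-1}(x_1),\ldots,\Lambda_0^{-1}(x_p)\bigr)$, which is well defined and $\mathcal{C}^1$ in a neighbourhood of $(\Lambda_0(d),\ldots,\Lambda_0(d))$ since $\Lambda_0$ is a $\mathcal{C}^\infty$ strictly increasing function on $(-0.5,1.5)$ with $\Lambda_0'(d)\neq 0$. Its Jacobian at that point is the scalar matrix $(\Lambda_0'(d))^{-1}I_p$, so the limiting covariance becomes $(\Lambda_0'(d))^{-2}\Gamma_p(d)$, yielding \refeq{cltd}. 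The main substantive work has already been carried out in Property \ref{MCLT} and the expansion of $\E[IR(m)]$; the only delicate point here is the bookkeeping that shows the lower bound $(1+2\beta)^{-1}\vee(4d+3)^{-1}$ on $\alpha$ is exactly what kills the bias after renormalization by $\sqrt{N/m}$.
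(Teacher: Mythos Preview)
Your proof is correct and follows exactly the same route as the paper: use Property \ref{devEIR} to show $\sqrt{N/m}\big(\E[IR_N(jm)]-\Lambda_0(d)\big)\to 0$ under the stated condition on $\alpha$, replace the centering in the CLT of Property \ref{MCLT} via Slutsky, and then apply the Delta method with $\Lambda_0^{-1}$. If anything, you spell out the case analysis for the bias more explicitly than the paper does.
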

\begin{rem}
If $\beta<2d+1$, the estimator $\widehat
d_N(m)$ is a semiparametric estimator of $d$ and its asymptotic
mean square error can be minimized with an appropriate sequence $(m_N)$ reaching the well-known minimax rate of convergence
for memory parameter $d$ in this semiparametric setting (see for
instance Giraitis {\em et al.}, 1997). Indeed,
under Assumption $S(d,\beta)$ with $d \in (-0.5,0.5)$ and $\beta>0$ and if
$m_N=[N^{1/(1+2\beta)}]$, then the estimator $\widehat d_N(m_N)$ is rate
optimal in the minimax sense, {\em i.e.}
$$
\limsup _{N \to \infty}\sup_{d\in (-0.5,0.5)} ~~\sup_{f  \in S(d,\beta)}N^{\frac {2\beta}{1+2\beta}} \cdot \E [({\widehat d}_N(m_N)
-d)^2]<\infty.
$$
\end{rem}
\noindent From the multidimensional CLT (\ref{cltd}) a pseudo-generalized least square estimation (LSE) of $d$ is possible by defining the following matrix:
\begin{equation}\label{Sigma}
\widehat \Sigma_N(m):=(\Lambda'_0(\widehat d_N(m))^{-2}\, \Gamma_p(\widehat d_N(m)).
\end{equation}
Since the function $d\in (-0.5,1.5) \mapsto \sigma(d)/\Lambda'(d)$ is ${\cal C}^\infty$ it is obvious that under assumptions of Theorem \ref{cltnada} then
$$
\widehat \Sigma_N(m) \limiteprobaN (\Lambda'_0(d))^{-2}\, \Gamma_p(d).
$$
Then with the vector $J_p:=(1)_{1\leq j \leq p}$ and denoting $J_p'$ its transpose, the pseudo-generalized LSE of $d$ is:
$$
\widetilde d_N(m):=\big (J_p' \big (\widehat \Sigma_N(m)\big )^{-1} J_p \big )^{-1}\, J_p' \,  \big ( \widehat \Sigma_N(m) \big )^{-1} \big (\widehat d_N(m_i) \big ) _{1\leq i \leq p}
$$
%because $\big (J_p'\widehat \Gamma_p^{-1} J_p \big )^{-1}=1$.
It is well known (Gauss-Markov Theorem) that the Mean Square Error (MSE) of $\widetilde d_N(m)$ is smaller or equal than all the MSEs of $\widehat d_N(jm)$, $j=1,\ldots,p$. Hence, we obtain under the assumptions of Theorem \ref{cltd}:
\begin{eqnarray}\label{TLCdtilde}
\sqrt{\frac{N}{m}}\big (\widetilde d_N(m)-d\big ) \limiteloiN {\cal N}\Big(0 \, , \, \Lambda'_0(d)^{-2}\,\big (J_p' \, \Gamma^{-1}_p(d)J_p\big )^{-1}\Big ),
\end{eqnarray}
and $\Lambda'_0(d)^{-2}\big (J_p' \, \Gamma^{-1}_p(d)J_p\big )^{-1} \leq \Lambda'_0(d)^{-2} \sigma^2(d)$. \\
~\\
Now, consider the following test problem: for $(X_1,\cdots,X_n)$ a path of $X$ a Gaussian time series, chose between
\begin{itemize}
\item $H_0$: the spectral density of $X$ satisfies Assumption $S(d,\beta)$ with $-0.5<d<0.5$ and $\beta>0$; 
\item $H_1$: the spectral density of $X$ does not satisfy such a behavior.
\end{itemize}
We deduce from the multidimensional CLT (\ref{cltd}) a $\chi^2$-type goodness-of-fit test statistic defined by:
$$
\widehat T_N(m):=\frac N m \, \big (\widetilde d_N(m)-\widehat d_N(j \, m) \big )'_{1\leq j \leq p}\big (\widehat \Sigma_N(m)\big )^{-1}\big (\widetilde d_N(m)-\widehat d_N(j \,m) \big )_{1\leq j \leq p}.
$$
Then the following limit theorem can be deduced from {Theorem} {\ref{cltnada}}:
\begin{prop}\label{testnada}
Under the assumptions of Theorem \ref{cltnada} then:
$$
\widehat T_N(m) \limiteloiN \chi^2(p-1).
$$
\end{prop}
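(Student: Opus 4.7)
The plan is to express $\widehat T_N(m)$ as a quadratic form in the asymptotically Gaussian vector $V_N:=\sqrt{N/m}\,(\widehat d_N(jm)-d)_{1\le j\le p}$ and to identify the limiting law as a $\chi^2(p-1)$ via the standard projection argument of generalized least squares.

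\textbf{Algebraic rewriting.} Set $P_N:=J_p(J_p'\widehat\Sigma_N(m)^{-1}J_p)^{-1}J_p'\widehat\Sigma_N(m)^{-1}$. This matrix is idempotent ($P_N^2=P_N$) and satisfies $P_NJ_p=J_p$. By the very definition of $\widetilde d_N(m)$ one has $\widetilde d_N(m)\,J_p=P_N(\widehat d_N(jm))_{1\le j\le p}$, hence, using $P_N(dJ_p)=dJ_p$, we get $(\widetilde d_N(m)-\widehat d_N(jm))_{1\le j\le p}=-Q_N(\widehat d_N(jm)-d)_{1\le j\le p}$ with $Q_N:=I_p-P_N$. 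Multiplying by $\sqrt{N/m}$ yields the identity $\widehat T_N(m)=V_N'\,Q_N'\,\widehat\Sigma_N(m)^{-1}\,Q_N\,V_N$.

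\textbf{Passing to the limit.} By Theorem \ref{cltnada}, $V_N$ converges in distribution to $V\sim\mathcal N(0,\Sigma(d))$ with $\Sigma(d):=(\Lambda'_0(d))^{-2}\Gamma_p(d)$, and as noted right after \eqref{Sigma} we have $\widehat\Sigma_N(m)\to\Sigma(d)$ in probability. Since the map $\Sigma\mapsto I_p-J_p(J_p'\Sigma^{-1}J_p)^{-1}J_p'\Sigma^{-1}$ is continuous on the open cone of positive definite matrices, $Q_N$ converges in probability to $Q:=I_p-J_p(J_p'\Sigma(d)^{-1}J_p)^{-1}J_p'\Sigma(d)^{-1}$. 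Slutsky's theorem together with the continuous mapping theorem then give $\widehat T_N(m)\overset{\mathcal L}{\longrightarrow}V'Q'\Sigma(d)^{-1}QV$.

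\textbf{Identification of the limit law.} Abbreviate $\Sigma:=\Sigma(d)$ and $P:=J_p(J_p'\Sigma^{-1}J_p)^{-1}J_p'\Sigma^{-1}$. Because $J_p'\Sigma^{-1}J_p$ is a scalar, the matrix $\Sigma^{-1}P=(\Sigma^{-1}J_p)(J_p'\Sigma^{-1}J_p)^{-1}(\Sigma^{-1}J_p)'$ is symmetric, so $P'\Sigma^{-1}=\Sigma^{-1}P$; combined with $P'\Sigma^{-1}P=\Sigma^{-1}P^2=\Sigma^{-1}P$, a direct expansion gives $Q'\Sigma^{-1}Q=\Sigma^{-1}-\Sigma^{-1}P=\Sigma^{-1}Q$. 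Setting $V=\Sigma^{1/2}Z$ with $Z\sim\mathcal N(0,I_p)$, the limit becomes $Z'(I_p-M)Z$ where $M:=\Sigma^{-1/2}J_p(J_p'\Sigma^{-1}J_p)^{-1}J_p'\Sigma^{-1/2}$. The matrix $M$ is symmetric, idempotent and of rank one (its range is spanned by $\Sigma^{-1/2}J_p$), so $I_p-M$ is an orthogonal projector of rank $p-1$ and $Z'(I_p-M)Z\sim\chi^2(p-1)$, which is the announced limit.

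\textbf{Main obstacle.} There is no substantial difficulty left: the only delicate point is that $V_N$ be asymptotically centered at zero, and the lower bound $\alpha>(1+2\beta)^{-1}\vee(4d+3)^{-1}$ on the window exponent in Theorem \ref{cltnada} is exactly what forces the deterministic bias $\Lambda_0^{-1}(\E[IR_N(jm)])-d$ to be $o(\sqrt{m/N})$; this is already encoded in Theorem \ref{cltnada} and requires no additional argument. Everything else reduces to the GLS projection identity of Step~1 together with Slutsky and continuous mapping.
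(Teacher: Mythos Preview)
Your proof is correct and follows essentially the same route as the paper: both exploit the GLS projection identity $(\widetilde d_N(m)-\widehat d_N(jm))_j=-(I_p-P_N)(\widehat d_N(jm)-d)_j$, rewrite the test statistic as a quadratic form in the asymptotically standard Gaussian vector, and identify the limiting matrix as a rank-$(p-1)$ orthogonal projector so that Cochran's theorem yields the $\chi^2(p-1)$ limit. Your version is in fact a bit more explicit about the Slutsky/continuous-mapping step used to replace $\widehat\Sigma_N(m)$ by $\Sigma(d)$ before applying the projection argument, whereas the paper invokes Cochran directly with the random matrices.
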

%\noindent Note that this test is also a test of long memory when $d>0$ and it is very simple to be implemented.
\section{Adaptive versions of the estimator and goodness-of-fit test}\label{Adapt}
Theorem \ref{cltnada} and Proposition \ref{testnada} are interesting but they require the knowledge of $\beta$ to be used (and therefore an appropriated choice of $m$). We suggest now a procedure (see also Bardet {\em et al.}, 2008) for obtaining a data-driven selection of an optimal sequence $(m_N)$. For $d \in (-0.5,1.5)$ and $\alpha \in (0,1)$, define
\begin{equation}\label{QNdef}
Q_N(\alpha,d):=\big (\widehat d_N(j\, N^\alpha)-d \big )'_{1\leq j \leq p} \big (\widehat \Sigma_N(N^\alpha)\big )^{-1}\big (\widehat d_N(j\, N^\alpha)-d \big )_{1\leq j \leq p}.
\end{equation}
Note that by the previous convention, $\widehat d_N(j\, N^\alpha)=\widehat d_N(j\, [N^\alpha])$ and $\widetilde d_N(N^\alpha)=\widetilde d_N([N^\alpha])$. Thus
$Q_N(\alpha,d)$ corresponds to the sum of the pseudo-generalized squared distance. From previous computations, it is obvious that for a fixed
$\alpha\in (0,1)$, $Q$ is minimized by $\widetilde d_N(N^\alpha)$ and therefore for $0<\alpha<1$ define
$$
\widehat Q_N(\alpha):=Q_N(\alpha,\widetilde d_N(N^{\alpha})).
$$
It remains to minimize $\widehat Q_N(\alpha)$ on $(0,1)$.
However, since $\widehat \alpha_N$ has to be obtained from
numerical computations, the interval $(0,1)$ can be discretized as
follows,
$$
\widehat \alpha_N \in {\cal A}_N=\Big \{\frac {2}{\log
N}\,,\,\frac { 3}{\log N}\,, \ldots,\frac {\log [N/p]}{\log N}
\Big \}.
$$
Hence, if $\alpha \in {\cal A}_N$, it exists $k \in \{2,3,\ldots,
\log [N/p]\}$ such that $k=\alpha \, \log N$.
Consequently,
define $\widehat \alpha_N$ by
\begin{eqnarray*}
\widehat Q_N(\widehat \alpha_N ):=\min_{\alpha \in {\cal A}_N}
\widehat Q_N(\alpha).
\end{eqnarray*}
\noindent From the central limit theorem ({\ref{cltd}}) one deduces the following : 
\begin{prop}\label{hatalpha}
Assume that Assumption $S(d,\beta)$ holds with $-0.5<d<0.5$ and $\beta>0$. Moreover, if $\beta>2d+1$, suppose that $c_0,c_1,c_2,d,\beta$ and $\varepsilon$ are such that Condition (\ref{condbis1}) or (\ref{condbis2}) holds. Then,
$$
\widehat \alpha_N
\limiteprobaN \alpha^*=\frac 1 {(1+2\beta)\wedge (4d+3)}.%\quad \mbox{when}\quad \beta\leq 1+2d.
$$
\end{prop}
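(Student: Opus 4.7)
The plan is to exhibit $\widehat{Q}_N(\alpha)$ as the sum of a deterministic bias-squared term of order $N^{-2\alpha\gamma}$ and a stochastic variance term of order $N^{\alpha-1}$, with $\gamma := \beta \wedge (2d+1)$, and then to show that the sum is minimized in $\alpha$ exactly when these contributions balance, namely at $\alpha^* = 1/(1+2\gamma)$.

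Set $m := [N^{\alpha}]$ and $Y_N := (\widehat d_N(j m)-d)_{1\leq j\leq p}$. A Gauss--Markov computation identifies
\begin{equation*}
\widehat Q_N(\alpha) = Y_N'\, P_N\, Y_N, \qquad
P_N := \widehat \Sigma_N(m)^{-1} - \widehat \Sigma_N(m)^{-1} J_p \bigl(J_p'\widehat\Sigma_N(m)^{-1}J_p\bigr)^{-1}J_p'\widehat\Sigma_N(m)^{-1},
\end{equation*}
and $P_N J_p=0$ kills the $J_p$-direction, so $Y_N'P_NY_N = (Y_N - dJ_p)'P_N(Y_N-dJ_p)$. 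From Property \ref{devEIR} applied at each scale $jm$ and a Taylor expansion of $\Lambda_0^{-1}$ around $\Lambda_0(d)$, $\E[\widehat d_N(jm)]-d = C(d,\beta)\,(jm)^{-\gamma}(1+o(1))$ (with a harmless logarithmic correction when $\beta = 2d+1$), so that $Y_N = m^{-\gamma}\, w + (Y_N - \E Y_N) + o(m^{-\gamma})$ with $w = C(d,\beta)\,(j^{-\gamma})_{1\leq j\leq p} \in \R^p$. The structural observation is that $w$ is not collinear with $J_p$ as soon as $\gamma>0$, so $w'P_\infty w > 0$, where $P_\infty$ is the limit of $P_N$ obtained by replacing $\widehat \Sigma_N(m)$ by $(\Lambda'_0(d))^{-2}\Gamma_p(d)$.

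Combining $\widehat \Sigma_N(m) \limiteprobaN (\Lambda'_0(d))^{-2}\Gamma_p(d)$ for every fixed $\alpha\in(0,1)$ with Property \ref{MCLT} applied to the centered part $\sqrt{N/m}(Y_N-\E Y_N)$, the expansion of the quadratic form yields
\begin{equation*}
\widehat Q_N(\alpha) = c(d,\beta)\, N^{-2\alpha\gamma}\,(1+o_{\Pro}(1)) + O_{\Pro}\!\bigl(N^{\alpha-1}\bigr) + O_{\Pro}\!\bigl(N^{(\alpha-1)/2 -\alpha\gamma}\bigr),
\end{equation*}
with $c(d,\beta)=w'P_\infty w>0$; the cross term is the geometric mean of the other two and is therefore always dominated. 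Consequently $\log \widehat Q_N(\alpha)/\log N$ converges in probability to $\Phi(\alpha) := \max(-2\alpha\gamma,\,\alpha-1)$, a convex piecewise linear function attaining its unique minimum at $\alpha^* = 1/(1+2\gamma) = 1/((1+2\beta)\wedge(4d+3))$ where the two branches coincide. Since the grid $\mathcal{A}_N$ has step $1/\log N$, it contains a point within $1/\log N$ of $\alpha^*$, and for any $\eta>0$ the gap $\Phi(\alpha)-\Phi(\alpha^*) \ge \delta(\eta)>0$ uniformly for $\alpha \in \mathcal A_N$ outside $[\alpha^*-\eta,\alpha^*+\eta]$. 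A union bound over the $O(\log N)$ grid points, combined with polynomial-moment tightness of $Y_N'P_NY_N$ (available from Gaussianity of $X$), then yields $\widehat\alpha_N \in [\alpha^*-\eta,\alpha^*+\eta]$ with probability tending to $1$, hence $\widehat \alpha_N \limiteprobaN \alpha^*$.

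The main obstacle is the regime $\beta>2d+1$ where $\gamma=2d+1$: in this case the leading bias no longer comes from the $c_1|\lambda|^{-2d+\beta}$ correction of Assumption $S(d,\beta)$ but from an intrinsic $O(m^{-(2d+1)})$ contribution to $\E[IR_N(m)]$ detailed in Property \ref{devEIR}. One must trace the vector structure of this contribution along the $p$ scales and verify that it projects non-trivially onto the orthogonal complement of $J_p$; this is precisely what the extra hypothesis in Condition (\ref{condbis1}) or (\ref{condbis2}) is designed to guarantee. Without such a non-degeneracy requirement, the bias vector could align along $J_p$ and the claimed rate $N^{-2\alpha\gamma}$ would collapse to a smaller order, breaking the balance argument. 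Apart from this point, the scheme mirrors the adaptive bandwidth selection of Bardet \emph{et al.} (2008).
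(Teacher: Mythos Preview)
Your bias--variance balance strategy is exactly the one the paper uses: for $\alpha>\alpha^*$ the statistic $\frac{N}{N^{\alpha}}\widehat Q_N(\alpha)$ is asymptotically $\chi^2(p-1)$, while for $\alpha<\alpha^*$ the bias of $(\widehat d_N(jN^\alpha))_j$, projected off $J_p$, blows it up at rate $N^{(\alpha^*-\alpha)/\alpha^*}$. The paper carries this out with explicit $\chi^2$ tail bounds (Markov on $\exp\sqrt{\chi^2(p-1)}$ for the upper tail, the density near $0$ for the lower tail) so that the union bound over the $O(\log N)$ grid points is immediate; your ``$\log\widehat Q_N(\alpha)/\log N \to \Phi(\alpha)$'' repackaging is equivalent, but you should spell out those tail estimates rather than invoke ``polynomial-moment tightness''.

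The genuine gap is in the regime $\beta>2d+1$. Your expansion $\E[\widehat d_N(jm)]-d = C(d,\beta)(jm)^{-\gamma}(1+o(1))$ with a \emph{fixed} constant $C(d,\beta)$ is exactly what Property~\ref{devEIR} does \emph{not} deliver there: it only gives $O(m^{-2d-1})$. Conditions (\ref{condbis1})/(\ref{condbis2}) through Property~\ref{devEIR2} yield a componentwise lower bound $|\E[IR_N(jm)]-\Lambda_0(d)|\ge L(jm)^{-2d-1}$ with a definite sign, not the asymptotic $\sim C(jm)^{-2d-1}$ you need to write $w=C(j^{-\gamma})_j$ and conclude $w'P_\infty w>0$. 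A componentwise lower bound alone does not prevent the bias vector from lying in $\mathrm{span}(J_p)$, so your claim that these conditions ``guarantee non-trivial projection onto $J_p^\perp$'' is not what they directly do. The paper handles this case (tersely) by combining the two-sided bounds from Properties~\ref{devEIR} and \ref{devEIR2} with the specific $j$-dependence $j^{-(2d+1)}$ of both bounds to force $\|\widehat M_N(\Lambda_0^{-1}(\E[IR_N(jN^\alpha)])-d)_j\|_{\widehat\Sigma_N}$ to be of order $N^{(\alpha^*-\alpha)/(2\alpha^*)}$; you should make that step explicit rather than assert that a single leading constant exists.
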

\begin{rem}\label{defAn}
The choice of the set of discretization ${\cal A}_N$ is implied by our proof of convergence of $\widehat \alpha_N$ to $\alpha^*$. If the interval $(0,1)$ is stepped in $N^c$ points, with $c>0$, the used proof cannot
attest this convergence. However $\log N$ may be replaced in the previous expression of ${\cal A}_N$ by any negligible function
of $N$ compared to functions $N^c$ with $c>0$ (for instance, $(\log N)^a$ or $a\log N$ can be used).
\end{rem}
\begin{rem}\label{remcond}
The reference to Condition (\ref{condbis1}) or (\ref{condbis2}) is necessary because our proof of the convergence of $\widehat \alpha_N$ to $\alpha^*$ requires to know the exact convergence rate of $\E[IR_N(N^\alpha)]-\Lambda_0(d)$ when $\alpha<\alpha^*$. When $\beta\leq 2d+1$, since we replaced the conditions on the spectral density of Surgailis {\it et al.} (2007) by a second order condition (Assumption $S(d,\beta)$), this convergence rate can be obtained by computations (see Property \ref{devEIR}). But if $\beta> 2d+1$, we can only obtain $\E[IR_N(N^\alpha)]-\Lambda_0(d)=O(m^{-2d-1})$ under Assumption $S(d,\beta)$: the convergence rate could be slower than $m^{-2d-1}$ and then $\widehat \alpha_N$ could converge to $\alpha'<\alpha^*$ (from the proof of Proposition \ref{hatalpha}). Condition (\ref{condbis1}) and (\ref{condbis2}), which are not very strong, allow to obtain a first order bound for $\E[IR_N(N^\alpha)]-\Lambda_0(d)$ (see Proposition \ref{devEIR2}) and hence to prove $\widehat \alpha_N \limiteprobaN \alpha^*$.
\end{rem}
\noindent
From a straightforward application of the proof of Proposition \ref{hatalpha}, the asymptotic
behavior of $\widehat a_N$ can be specified, that is,
\begin{eqnarray}\label{hathatD}
\Pr \Big ( \frac {N^{\alpha^*}} {(\log N) ^\lambda}\leq
N^{\widehat \alpha_N} \leq N^{\alpha^*}\cdot ( \log N )^\mu \Big
)\limiteN 1,
\end{eqnarray}
for all positive real numbers $\lambda$ and $\mu$ such that $\lambda
> \frac {2\alpha^*} {(p-2)(1-\alpha^*)}$ and $\mu >\frac {12}{p-2}$. Consequently, the selected window $\widehat m_N=N^{\widehat \alpha_N}$ asymptotically growths as $N^{\alpha^*}$ up to a logarithm factor. \\
~\\
Finally, {Proposition} {\ref{hatalpha}} can be used to define an
adaptive estimator of $d$. First, define the straightforward
estimator $\widetilde d_N(N^{\widehat\alpha_N})$,
which should minimize the mean square error using $\widehat \alpha_N$.
However, the estimator $\widetilde d_N(N^{\widehat\alpha_N})$ does not satisfy a
CLT since $\Pr(\widehat \alpha_N\leq
\alpha^*)>0$ and therefore it can not be asserted  that
$\E(\sqrt{N/N^{\widehat\alpha_N}}(\widetilde d_N(N^{\widehat \alpha_N})-d) )= 0$. To
establish a CLT satisfied by an adaptive estimator of $d$, a (few) shifted sequence of $\widehat \alpha_N$, so called $\widetilde
\alpha_N $, has to be considered to ensure $\Pr(\widetilde
\alpha_N \leq \alpha^*)\limiteN 0$. Hence, consider the adaptive scale sequence $(\widetilde
m_N)$ such as  
$$
\widetilde
m_N:=N^{\widetilde \alpha_N}\quad \mbox{with}\quad \widetilde \alpha_N:=\widehat \alpha_N+ \frac {6\, \widehat \alpha_N} {(p-2)(1-\widehat
{\alpha}_N )} \cdot \frac {\log \log N}{\log N}.
$$
and the estimator
$$
\widetilde d_N^{(IR)} :=\widetilde
d_N(\widetilde m_N)=\widetilde d_N(N^{\widetilde \alpha_N}).
$$
The following theorem provides
the asymptotic behavior of the estimator $\widetilde d_N^{(IR)}$:
\begin{thm}\label{tildeD}
Under assumptions of {Proposition} {\ref{hatalpha}},
\begin{eqnarray}\label{CLTD2}
&&\sqrt{\frac{N}{N^{\widetilde \alpha_N }}} \big(\widetilde d_N^{(IR)}  - d \big)
\limiteloiN    {\cal N}\Big (0\, ; \,\Lambda'_0(d)^{-2}\,\big (J_p' \, \Gamma^{-1}_p(d)J_p\big )^{-1}\Big ).
\end{eqnarray}
Moreover, if $\beta\leq 2d+1$,~~~$\displaystyle \forall
\rho>\frac {2(1+3\beta)}{(p-2)\beta},~\mbox{}~~ \frac {N^{\frac
{\beta}{1+2\beta}} }{(\log N)^\rho} \cdot \big|\widetilde d_N^{(IR)} - d \big|
\limiteprobaN  0.$
\end{thm}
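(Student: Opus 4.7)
\emph{Strategy and localization of $\widetilde\alpha_N$.} The plan is to reduce the CLT for $\widetilde d_N(\widetilde m_N)$ to the deterministic-scale CLT (\ref{TLCdtilde}) by tightly controlling the data-driven scale. First, I would combine Proposition \ref{hatalpha} with the refined bound (\ref{hathatD}) to show that, on an event $E_N$ with $\Pr(E_N)\to 1$,
\[
N^{\alpha^*}\,(\log N)^{\nu_1} \;\leq\; \widetilde m_N \;\leq\; N^{\alpha^*}\,(\log N)^{\nu_2}
\]
for explicit constants $0<\nu_1<\nu_2$. Positivity of $\nu_1$ is the whole point of the shift: the coefficient $6\alpha^*/((p-2)(1-\alpha^*))$ in the definition of $\widetilde\alpha_N-\widehat\alpha_N$ strictly exceeds the threshold $2\alpha^*/((p-2)(1-\alpha^*))$ appearing in the lower bound of (\ref{hathatD}), so that $\widetilde m_N$ lies \emph{above} $N^{\alpha^*}$ by at least a logarithmic factor with high probability; the upper bound follows from $\mu>12/(p-2)$ in (\ref{hathatD}) together with the consistency $\widehat\alpha_N\to\alpha^*$.

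\emph{Vanishing of the bias.} On $E_N$ I would combine Property \ref{devEIR} with (when $\beta>2d+1$) the refinement granted by Proposition \ref{devEIR2} under Condition (\ref{condbis1}) or (\ref{condbis2}) to obtain $|\E[IR_N(jm)]-\Lambda_0(d)|=O(m^{-\gamma}\log m)$ uniformly in $j\leq p$, with $\gamma=\beta\wedge(2d+1)$. Since $(1-\alpha^*)/2=\alpha^*\gamma$ by definition of $\alpha^*$, a direct computation of the $N$-exponent shows that on $E_N$ the product $\sqrt{N/\widetilde m_N}\cdot \widetilde m_N^{-\gamma}$ is of order $(\log N)^{-\nu_1(\gamma+1/2)}$ up to sub-logarithmic factors, hence tends to zero. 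Applying the Delta method to $\Lambda_0^{-1}$ then transfers this to $\sqrt{N/\widetilde m_N}\,(\E[\widehat d_N(j\widetilde m_N)]-d)\to 0$ for each $j\leq p$.

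\emph{CLT at the random scale and rate.} The main obstacle is transferring the deterministic-scale CLT (\ref{TLCdtilde}) to the random scale $\widetilde m_N=N^{\widetilde\alpha_N}$. The plan is to exploit the discretized structure $\widetilde\alpha_N\in{\cal A}_N$ (of cardinality at most $\log N$) together with the paragraph-1 localization, by writing
\[
\mathbf{1}_{E_N}\sqrt{N/\widetilde m_N}\bigl(\widetilde d_N(\widetilde m_N)-d\bigr)=\sum_{\alpha\in {\cal A}_N\cap I_N}\mathbf{1}_{\{\widetilde\alpha_N=\alpha\}\cap E_N}\sqrt{N/N^\alpha}\bigl(\widetilde d_N(N^\alpha)-d\bigr),
\]
with $I_N=[\alpha^*+\nu_1\log\log N/\log N,\alpha^*+\nu_2\log\log N/\log N]$. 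Since the limiting covariance in (\ref{TLCdtilde}) is independent of $\alpha$, and since paragraph 2 gives the bias control uniformly in $\alpha\in I_N$, each term converges in distribution to the same Gaussian limit; a Slutsky-type argument combined with the near-constancy (in distribution) of $\sqrt{N/N^\alpha}(\widetilde d_N(N^\alpha)-\E\widetilde d_N(N^\alpha))$ across the shrinking window $I_N$ yields the CLT (\ref{CLTD2}). Finally, for the rate statement, when $\beta\leq 2d+1$ we have $\alpha^*=1/(1+2\beta)$, and paragraph 1 provides $\widetilde\alpha_N-\alpha^*\leq (12\beta+3)/((p-2)\beta)\cdot \log\log N/\log N+o(\log\log N/\log N)$, so $\sqrt{\widetilde m_N/N}\leq N^{-\beta/(1+2\beta)}(\log N)^{\kappa+o(1)}$ with $\kappa=(12\beta+3)/(2(p-2)\beta)$. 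Combined with the $O_P(1)$ bound from the CLT this yields $|\widetilde d_N^{(IR)}-d|=O_P(N^{-\beta/(1+2\beta)}(\log N)^{\kappa+o(1)})$, and since $2(1+3\beta)/((p-2)\beta)>\kappa$ (because $4+12\beta>12\beta+3$), any $\rho$ strictly above the former threshold makes the remaining logarithmic factor vanish, giving the claimed rate.
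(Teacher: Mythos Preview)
Your outline matches the paper's approach: its proof of Theorem \ref{tildeD} is a single sentence pointing to Theorem \ref{cltnada}, Proposition \ref{hatalpha} (and its proof), and ``conditional probabilities'', which is precisely the localization-plus-conditioning scheme you flesh out in detail, and your rate computation correctly recovers the threshold $2(1+3\beta)/((p-2)\beta)$. One minor slip: in the bias step you invoke Proposition \ref{devEIR2} when $\beta>2d+1$, but that result furnishes a \emph{lower} bound $|\E[IR_N(m)]-\Lambda_0(d)|\geq L\,m^{-2d-1}$, not an upper one---the upper bound $O(m^{-2d-1})$ you need there already comes from Property \ref{devEIR} alone, and Proposition \ref{devEIR2} is used only inside the proof of Proposition \ref{hatalpha}.
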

\begin{rem}
When $\beta\leq 2d+1$, the adaptive estimator $\widetilde d_N^{(IR)} $ converges to $d$ with a rate of convergence rate equal to the
minimax rate of convergence $N^{\frac{\beta}{1+2\beta}}$ up to a logarithm
factor (this result being classical within this semiparametric
framework). Thus there exists $\ell<0$ such that
$$
N^{\frac{2\beta}{1+2\beta}}(\log N)^{\ell} \E (\widetilde d_N^{(IR)}  -d)^2 <\infty.
$$
Therefore  $\widetilde d_N^{(IR)} $ satisfies an oracle property for the considered semiparametric model.\\
If $\beta>2d+1$, the estimator is not rate optimal. However, simulations (see the following Section) will show that even if $\beta >2d+1$, the rate of convergence of  $\widetilde d_N^{(IR)} $  can be better than the one of the best known rate optimal estimators (local Whittle or global log-periodogram estimators).
\end{rem}
\noindent Moreover an adaptive version of the previous goodness-of-fit test can be derived. Thus define
\begin{equation}\label{TNada}
%\widehat {\widehat T_N}:=\widehat T_N(N^{\widehat \alpha_N} )\quad \mbox{and}\quad
\widetilde T^{(IR)}_N:=\widehat T_N(N^{\widetilde \alpha_N} ).
\end{equation}
Then,
\begin{prop}\label{testada}
Under assumptions of Proposition \ref{hatalpha},
$$
\widetilde T^{(IR)}_N \limiteloiN \chi^2(p-1).
$$
\end{prop}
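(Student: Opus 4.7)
The plan is to reduce the claim to Proposition \ref{testnada} by promoting the multivariate CLT \eqref{cltd} to the random bandwidth $\widetilde m_N = N^{\widetilde \alpha_N}$, establishing consistency of $\widehat \Sigma_N(\widetilde m_N)$, and then concluding by Slutsky's lemma and the continuous mapping theorem.

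First, I would check that $\widetilde m_N$ satisfies the bandwidth hypothesis of Theorem \ref{cltnada} with probability tending to $1$. By Proposition \ref{hatalpha}, $\widehat \alpha_N \to \alpha^*$ in probability; the strictly positive additive shift of order $\log\log N/\log N$ in the definition of $\widetilde \alpha_N$ ensures simultaneously $\widetilde \alpha_N \to \alpha^*$ in probability and $\Pr(\widetilde \alpha_N>\alpha^*)\to 1$. On the latter event, $\widetilde m_N$ lies in the admissible range of Theorem \ref{cltnada}.

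Second, and this is the core step, I would establish the vector-valued CLT
\begin{eqnarray*}
\sqrt{\frac{N}{\widetilde m_N}}\,\big(\widehat d_N(j\,\widetilde m_N)-d\big)_{1\le j\le p}\limiteloiN {\cal N}\big(0,(\Lambda'_0(d))^{-2}\Gamma_p(d)\big),
\end{eqnarray*}
which is the multidimensional analogue of the one-dimensional limit used to prove Theorem \ref{tildeD}. Its proof proceeds by discretizing $\widetilde \alpha_N$ over the grid ${\cal A}_N$ (whose size is $O(\log N)$), controlling uniformly the bias $\sqrt{N/m}\,(\E[IR_N(jm)]-\Lambda_0(d))$ along this grid (this is precisely where the $\log\log N/\log N$ shift is used, to push $\widetilde m_N$ past the critical exponent $\alpha^*$ by enough to kill the bias), and invoking the equicontinuity of the centered normalised IR statistics across scales in ${\cal A}_N$. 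Combined with the continuity of the map $d\mapsto (\Lambda'_0(d))^{-2}\Gamma_p(d)$ on $(-1/2,1/2)$ and the consistency of $\widehat d_N(\widetilde m_N)$ (a coordinate of the previous display), this also yields $\widehat \Sigma_N(\widetilde m_N) \limiteprobaN V$ with $V:=(\Lambda'_0(d))^{-2}\Gamma_p(d)$.

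Finally, set $U_N := \sqrt{N/\widetilde m_N}\,(\widehat d_N(j\widetilde m_N)-d)_{1\le j\le p}$ and let $P_N$ denote the $\widehat \Sigma_N(\widetilde m_N)^{-1}$-orthogonal projector onto $\mathrm{span}(J_p)$. Since $(I-P_N)J_p=0$, the centering by $dJ_p$ is transparent and
\begin{eqnarray*}
\widetilde T^{(IR)}_N = U_N'\,(I-P_N)'\,\widehat \Sigma_N(\widetilde m_N)^{-1}\,(I-P_N)\,U_N.
\end{eqnarray*}
By Slutsky and continuous mapping, the right-hand side converges in distribution to $Z'(I-P)'V^{-1}(I-P)Z$ with $Z\sim{\cal N}(0,V)$ and $P$ the $V^{-1}$-orthogonal projector onto $\mathrm{span}(J_p)$; a standard computation shows that this quadratic form is $\chi^2(p-1)$, since $I-P$ has rank $p-1$. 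The main obstacle is the second step, namely the uniform-in-scale control underlying the random-bandwidth CLT, which is exactly the content of the crucial part of the proof of Theorem \ref{tildeD} and which makes the specific form of the $\log\log N/\log N$ shift in $\widetilde \alpha_N$ essential.
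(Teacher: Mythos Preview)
Your proposal is correct and follows essentially the same route as the paper: the paper's one-line proof says to combine Theorem \ref{tildeD} (the random-bandwidth CLT, whose proof in fact yields the multidimensional version you state) with the projection/Cochran argument of Proposition \ref{testnada}, using ``conditional distributions'' to handle the data-driven scale. Your discretization over ${\cal A}_N$ together with Slutsky and continuous mapping is exactly an explicit rendering of that conditional-distribution step, so there is no substantive difference.
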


\section{Simulations and Monte-Carlo experiments}\label{Simu}
In the sequel, the numerical properties (consistency, robustness, choice of the parameter
$p$) of $\widetilde d_N^{(IR)}$ are investigated.  Then
the simulation results of $\widetilde d_N^{(IR)}$
are compared to those obtained with the best known semiparametric long-memory estimators.
\begin{rem}
Note that all
the softwares (in Matlab language) used in this Section are available with a free access on {\tt
http://samm.univ-paris1.fr/-Jean-Marc-Bardet}.
\end{rem}
\noindent To begin with, the simulation conditions have to be specified.
The results are obtained from $100$ generated independent samples of
each process belonging to the following "benchmark". The concrete
procedures of generation of these processes are obtained from the
circulant matrix method, as detailed in Doukhan {\it et al.} (2003).
The simulations are realized for different values of $d$,
$N$ and processes which satisfy Assumption $S(d,\beta)$:
\begin{enumerate}
\item the fractional Gaussian noise (fGn) of parameter $H=d+1/2$ (for $-0.5<d<0.5$) and $\sigma^2=1$. Such a process is such that
{Assumption} $S(d,2)$ holds;
\item the FARIMA$[p,d,q]$ process with parameter $d$ such that
$d \in (-0.5,0.5)$, the innovation
variance $\sigma^2$ satisfying $\sigma^2=1$ and $p, q\, \in \N$.
A FARIMA$[p,d,q]$  process is such that
Assumption $S(d,2)$ holds;
%\item the FARIMA[1,d,1] with Gaussian innovations and parameter $d$ such that
%$d=(1-D)/2 \in ]0,0.5[$ and $\phi=-0.3$, $\theta=-0.7$ where
%$\phi$ denotes the AR coefficient and $\theta$ the MA coefficient.
%The spectral density $f_{FARIMA}$ of such a process is included in
%the set ${\cal H}(2,C_{D'})$ (thus $D'=2$);
\item the Gaussian stationary process $X^{(d,\beta)}$, such as its spectral density is
\begin{eqnarray}
f_3(\lambda)=\frac 1 {\lambda^{2d}}(1+\lambda^{\beta})~~~\mbox{for
$\lambda \in [-\pi,0)\cup (0,\pi]$},
\end{eqnarray}
with $d \in (-0.5,0.5)$ and $\beta\in (0,\infty)$. Therefore the spectral density $f_{3}$ is such as
Assumption $S(d,\beta)$ holds.
\end{enumerate}
A "benchmark" which will be considered in the sequel consists of the following particular cases of these processes for $d=-0.4,-0.2,0,0.2,0.4$:
\begin{itemize}
\item fGn processes with parameters $H=d+1/2$;
\item FARIMA$[0,d,0]$ processes with standard Gaussian
innovations;
\item FARIMA$[1,d,1]$ processes with standard Gaussian
innovations and AR coefficient $\phi=-0.3$ and MA coefficient
$\phi=0.7$;
\item $X^{(d,\beta)}$ Gaussian processes with $\beta=1$.
\end{itemize}
\subsection{Application of the IR estimator and tests applied to generated data}
{\bf Choice of the parameter $p$:} This parameter is
important to estimate the "beginning" of the linear part of the
graph drawn by points $(i,IR(im))_i$. On the
one hand, if $p$ is a too small a number (for instance $p=3$),
another small linear part of this graph (even before the "true"
beginning $N^{\alpha^*}$) may be chosen. On the
other hand, if $p$ is a too large a number (for instance
$p=50$ for $N=1000$), the estimator $\widetilde \alpha_N$ will
certainly satisfy $\widetilde \alpha_N<\alpha^*$ since it will not be
possible to consider $p$ different windows larger than
$N^{\alpha^*}$. Moreover, it is
possible that a "good" choice of $p$ depends on the "flatness"
of the spectral density $f$, {\it i.e.} on $\beta$. We have proceeded to simulations for several values of
$p$ (and $N$ and $d$). Only $\sqrt{MSE}$ of estimators are presented.
The results are specified in {Table} {\ref{Table1}}.\\
~\\
\begin{table}[t]
{\scriptsize
\begin{center}
$N=10^3 $ ~\begin{tabular}{|c|c|c|c|c|c|c|}
\hline\hline
Model & Estimates  & $p=5$ & $p=10$ & $p=15$ & $p=20$   \\
\hline \hline
fGn~$(H=d+1/2)$& $\sqrt{MSE}$ $\widetilde d_N^{(IR)}$ &\bf 0.088*& 0.094&0.101 & 0.111 \\
& mean($\widetilde m_N$)  & 11.8&12.5&16.0 &19.4 \\
& $\widehat{ proba} $  & 0.93& 0.89&0.86&0.85\\ \hline
FARIMA$(0,d,0)$& $\sqrt{MSE}$ $\widetilde d_N^{(IR)}$ & 0.112&  0.099&\bf  0.094* &0.107 \\
& mean($\widetilde m_N$)  & 13.9&12.5&14.6 &17.9 \\
& $\widehat{ proba} $  & 0.94& 0.92&0.88&0.86\\ \hline
  FARIMA$(1,d,1)$&  $\sqrt{MSE}$ $\widetilde d_N^{(IR)}$ & 0.141&\bf 0.136*&0.140&0.149 \\
& mean($\widetilde m_N$)  & 15.2&15.0&18.2&21.1 \\
& $\widehat{ proba} $  & 0.94& 0.89&0.86&0.82\\\hline
 $X^{(d,\beta)}$, $\beta=1$&   $\sqrt{MSE}$ $\widetilde d_N^{(IR)}$ & 0.122&\bf 0.112*&0.121&0.123 \\
& mean($\widetilde m_N$)  & 14.1&13.8&16.2&20.0 \\
& $\widehat{ proba} $  & 0.91& 0.90&0.87&0.85\\\hline
\hline
\end{tabular}
\end{center}
\begin{center}
$N=10^4 $ ~\begin{tabular}{|c|c|c|c|c|c|c|}
\hline\hline
Model & Estimates & $p=5$ & $p=10$ & $p=15$ & $p=20$   \\
\hline \hline
fGn~$(H=d+1/2)$& $\sqrt{MSE}$ $\widetilde d_N^{(IR)}$ & 0.030&0.022&0.019 &\bf 0.018* \\
& mean($\widetilde m_N$)  & 13.7&10.3&9.4 &8.9 \\
& $\widehat{ proba} $  & 0.95& 0.89&0.87&0.84\\ \hline
FARIMA$(0,d,0)$& $\sqrt{MSE}$ $\widetilde d_N^{(IR)}$ & 0.039&0.034& 0.033 &\bf  0.031* \\
& mean($\widetilde m_N$)  & 11.5&9.0&8.0 &7.2\\
& $\widehat{ proba} $  & 0.95& 0.90&0.88&0.82\\ \hline
 FARIMA$(1,d,1)$&  $\sqrt{MSE}$ $\widetilde d_N^{(IR)}$ & 0.067&0.062&\bf 0.061*&\bf  0.061* \\
& mean($\widetilde m_N$)  & 18.1&15.9&13.8&13.3\\
& $\widehat{ proba} $  & 0.95& 0.90&0.84&0.78\\\hline
$X^{(d,\beta)}$, $\beta=1$&   $\sqrt{MSE}$ $\widetilde d_N^{(IR)}$ & 0.071&  0.068&\bf  0.067*& 0.071 \\
& mean($\widetilde m_N$)  & 15.2&13.6&11.7&10.9 \\
& $\widehat{ proba} $  & 0.92& 0.88&0.85&0.80\\\hline
\hline
\end{tabular}
\end{center}
\begin{center}
$N=10^5 $ ~\begin{tabular}{|c|c|c|c|c|c|c|}
\hline\hline
Model & Estimates& $p=5$ & $p=10$ & $p=15$ & $p=20$   \\
\hline \hline
fGn~$(H=d+1/2)$& $\sqrt{MSE}$ $\widetilde d_N^{(IR)}$ & 0.012&0.008&0.007 &\bf 0.006* \\
& mean($\widetilde m_N$)  & 14.0&9.8&6.9 &7.9 \\
& $\widehat{ proba} $  & 0.92& 0.90&0.87&0.85\\ \hline
FARIMA$(0,d,0)$& $\sqrt{MSE}$ $\widetilde d_N^{(IR)}$ & 0.021&\bf 0.019*&\bf 0.019* &\bf  0.019* \\
& mean($\widetilde m_N$)  & 15.8&12.7&11.1 &9.8\\
& $\widehat{ proba} $  & 0.96& 0.94&0.92&0.89\\ \hline
 FARIMA$(1,d,1)$&  $\sqrt{MSE}$ $\widetilde d_N^{(IR)}$ & 0.039&0.037&\bf 0.035*&\bf 0.035* \\
& mean($\widetilde m_N$)  & 25.7&21.8&21.4&20.4 \\
& $\widehat{ proba} $  & 0.98& 0.98&0.94&0.93\\\hline
$X^{(d,\beta)}$, $\beta=1$&   $\sqrt{MSE}$ $\widetilde d_N^{(IR)}$ & 0.042&0.042&\bf 0.040*& 0.041 \\
& mean($\widetilde m_N$)  & 22.3&19.9&19.7&16.9 \\
& $\widehat{ proba} $  & 0.99& 0.97& 0.93&0.90\\\hline
\hline
\end{tabular}
\end{center}
}
\caption{\label{Table1} \it  $\sqrt{MSE}$ of the estimator $\widetilde d_N^{(IR)}$, sample mean of the estimator $\widetilde m_N$ and sample frequency that $\widehat{T_N}\leq q_{\chi^2(p-1)}(0.95)$
following $p$ from simulations of the different processes of the benchmark. For each value of $N$ ($10^3$, $10^4$ and $10^5$), of $d$ ($-0.4$, $-0.2$, $0$, $0.2$ and $0.4$) and $p$ ($5$, $10$, $15$, $20$), $100$ independent samples of each process are generated. The values $\sqrt{MSE}$ $\widetilde d_N^{(IR)}$,  mean($\widetilde m_N$) and $\widehat{ proba}$ are obtained from sample mean on the different values of $d$.}
\end{table}
\newline
\noindent {\it Conclusions from {Table} {\ref{Table1}}:} it is clear that $\widetilde d_N^{(IR)}$ converges to $d$ for the four processes, the faster for fGn and FARIMA$(0,d,0)$. The optimal choice of $p$ seems to depend on $N$ for the four processes: $\widehat p=10$ for $N=10^3$, $\widehat p=15$ for $N=10^4$ and $\widehat p\in [15,20]$ for $N=10^5$. The flatness of the spectral density of the process does not seem to have any influence, as well as the value of $d$ (result obtained in the detailed simulations). We will adopt in the sequel the choice $\widehat p=[1.5\, \log(N)]$ reflecting these results. At the contrary to the choice of $m$, this choice of $p$ only depends on $N$ and even if the adaptive scale $\widetilde m_N$ depends on $p$ its value does not change a lot when $p\in \{10,\cdots,20\}$ for $10^3\leq N \leq 10^5$.  \\
Concerning the adaptive choice of $m$, the main point to be remarked is that the smoother the spectral density the smaller $m$; thus $\widetilde m_N$ is smaller for a trajectory of a fGn or a FARIMA$(0,d,0)$ than for a trajectory of a FARIMA$(1,d,1)$ or $X^{(d,1)}$. The choice of $p$ does not appear to significantly affect the value of $\widetilde m_N$.  More detailed results show that the larger $d$ included in $(-0.5,0.5)$ the smaller $\widetilde m_N$: for instance, for the fGn, $N=10^4$ and $p=15$, the mean of $\widetilde m_N$ is respectively equal to $23.9$, $8.3$, $4.5$, $4.2$ and $3.8$ for $d$ respectively equal to $-0.4,~-0.2,~0,~0.2$ and $0.4$. This phenomena can be deduced from the theoretical study since $\alpha^*=(4d+3)^{-1}$ in this case and therefore $\widetilde m_N$ almost growths as $N^{(4d+3)^{-1}}$. \\
Finally, concerning the goodness-of-fit test, we remark that it is too conservative for $p=5$ or $10$ but close to the expected results for $p=15$ and $20$, especially for FARIMA$(1,d,1)$ or $X^{(d,1)}$.\\
~\\
\noindent {\bf Asymptotic distributions of the estimator and test:}
Figure \ref{Figure2} provides the density estimations  of
$\widetilde d_N^{(IR)}$ and $\widetilde T^{(IR)}_N$ for $100$ independent
samples of FGN processes with $d=0.2$ with $N=10^4$ for $p=15$. The goodness-of-fit to the theoretical asymptotic distributions (respectively Gaussian and chi-square) is satisfying.
\begin{figure}[ht]
\[
\epsfxsize 9cm \epsfysize 6cm \epsfbox{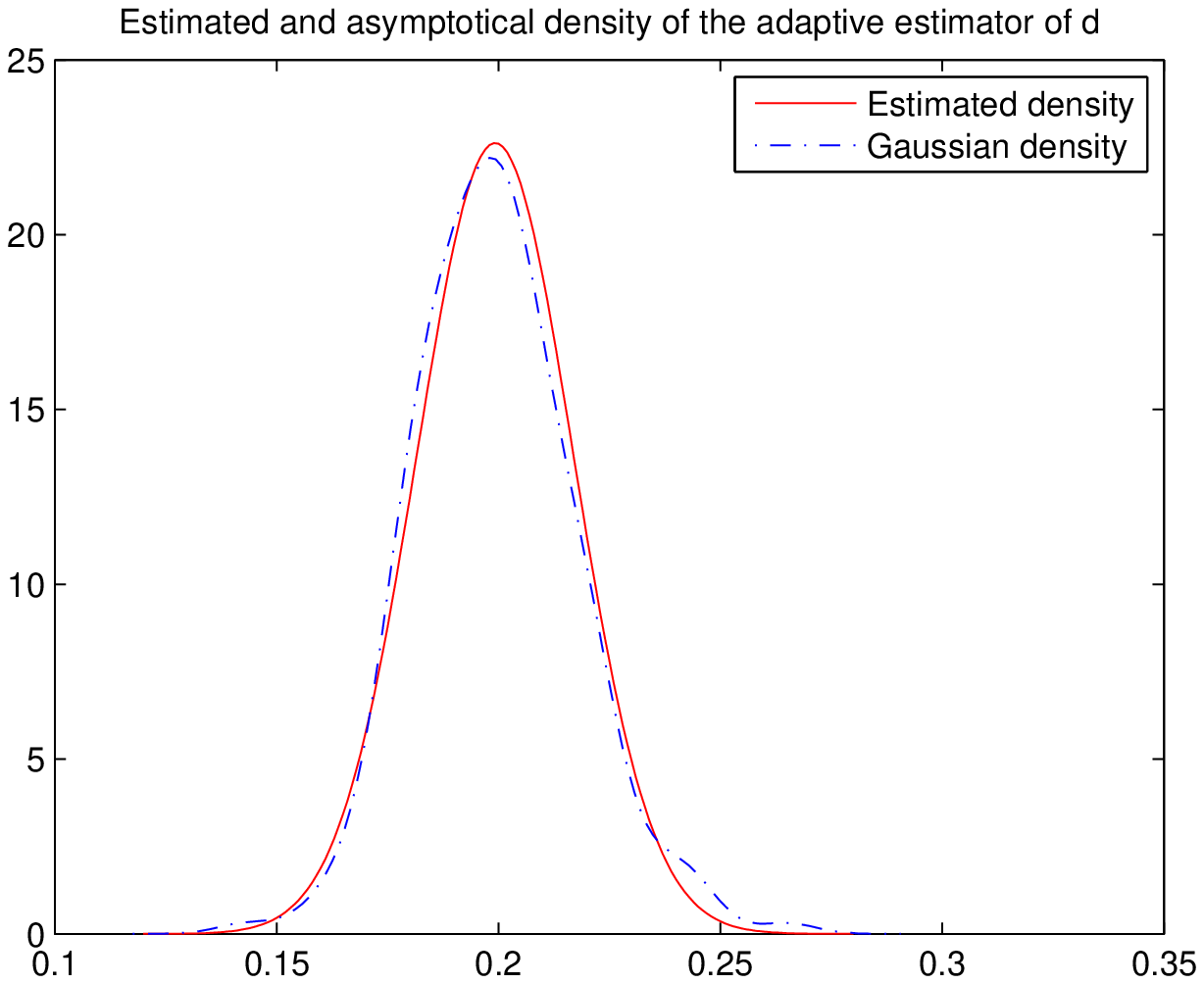}
\hspace*{-0.2 cm} \epsfxsize 9cm \epsfysize 6cm
\epsfbox{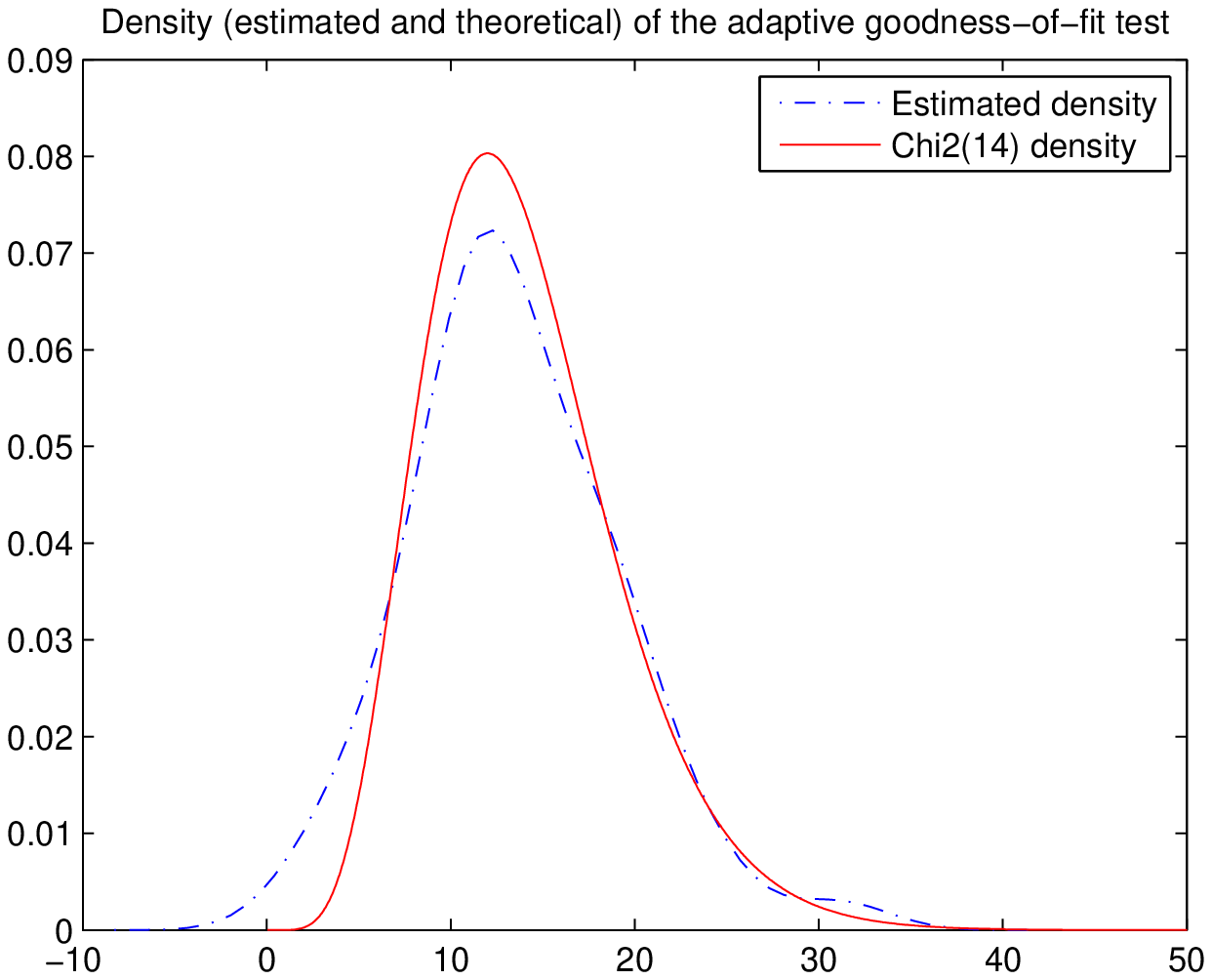}
\]
\caption{\it Density estimations and corresponding theoretical densities of $\widetilde d_N^{(IR)}$ and
$\widetilde T^{(IR)}_N$ for $100$ samples of fGn with $d=0.2$ with $N=10^4$ and $p=15$.}\label{Figure2}
\end{figure}
\\
~\\
%In the following subsection, Table 3
%exhibits a "better" convergence by using $\ell_2$ such that $\ell_2 \, \widehat a_N \simeq N/10$,
%and this estimators will be compared to other estimators of long memory parameter. \\

\subsection{Comparison with other adaptive semiparametric estimator of the memory parameter}
{\bf Consistency of semiparametric estimators:}
Here we consider the previous "benchmark" and apply the estimator  $\widetilde d_N^{(IR)}$ and $3$ other
 semiparametric estimators of $d$ known for their accuracies are considered:
\begin{itemize}
%\item $\widehat D_{BGK}$ is an "optimal" parametric Whittle estimator
%obtained from a BIC criterium model selection of fractionally
%differenced autoregressive models (introduced by Bhansali {it et
%al.}, 2006). The required confidence interval of the estimation
%$\widehat D_{BGK}$ is $[\widehat D_{R}-2/N^{1/4}\, , \, \widehat
%D_{R}-2/N^{1/4}]$;
%\item $\widehat D_{GRS}$ is an adaptive local periodogram estimator introduced by Giraitis
%{\it et al} (2000). It requires two parameters: a bandwidth
%parameter $m$, with a procedure of determination provided in this article,
% and a number of low
%trimmed frequencies $l$ (satisfying different conditions but without
%being fixed in this paper; after a number of  simulations,
%$l=\max(m^{1/3},10)$ is chosen);
\item $\widehat d_{MS}$ is the adaptive global log-periodogram estimator introduced
by Moulines and Soulier (1998, 2003), also called FEXP estimator,
with bias-variance balance parameter $\kappa=2$;
\item $\widehat d_{R}$ is the local Whittle estimator introduced by
Robinson (1995). The trimming parameter is $m=N/30$;
\item $\widehat d_{W}$ is an adaptive wavelet based estimator
introduced in Bardet {\it et al.} (2008) using a Lemarie-Meyer type wavelet (another similar choice could be the adaptive wavelet estimator introduced
in Veitch {\it et al.}, 2003, using a Daubechie's wavelet, but its robustness property are quite less interesting).
\item $\widetilde d_N^{(IR)}$ defined previously with $p= [1.5*\log(N)]$.
\item $\widehat d_N(10)$ and $\widehat d_N(30)$ which are the (univariate) IR estimator with $m=10$ and $m=30$ respectively, considered in Surgailis {\it et al.} (2007).
\end{itemize}
Simulation results are reported in {Table} {\ref{Table4}}. \\
\begin{table}[p] {\scriptsize
\begin{center}
$N=10^3~\longrightarrow $ ~\begin{tabular}{|c|c|c|c|c|c|c|}
\hline\hline
Model & $\sqrt{MSE}$ & $d=-0.4$ & $d=-0.2$ & $d=0$ & $d=0.2$ & $d=0.4$ \\
\hline \hline
fGn~$(H=d+1/2)$& $\sqrt{MSE}$ $\widehat d_{MS}$ & 0.102 &0.088&\bf 0.094 *&0.095&0.098 \\
&  $\sqrt{MSE}$ $\widehat d_{R}$ & 0.091 &0.108&0.106 &0.117&0.090\\
&  $\sqrt{MSE}$ $\widehat d_{W}$ & 0.215 &0.103&0.078 &\bf 0.073*& \bf 0.061* \\
&  $\sqrt{MSE}$ $\widetilde d_N^{(IR)}$ &\bf 0.074*  &\bf 0.087*&0.102 & 0.084&0.110 \\
&  $\sqrt{MSE}$ $\widehat d_N(10)$ &0.096  &0.135&0.154 & 0.158&0.154 \\
&  $\sqrt{MSE}$ $\widehat d_N(30)$ &0.112 &0.192&0.246 & 0.270&0.252 \\
\hline  FARIMA$(0,d,0)$& $\sqrt{MSE}$ $\widehat d_{MS}$ & 0.096 &0.096&0.098 &0.096&0.093 \\
&  $\sqrt{MSE}$ $\widehat d_{R}$ & 0.094 &0.113&0.107 &0.112&0.084 \\
&  $\sqrt{MSE}$ $\widehat d_{W}$ & \bf 0.069* &\bf 0.073*&\bf 0.074* &\bf 0.082*&\bf 0.085* \\
&  $\sqrt{MSE}$ $\widetilde d_N^{(IR)}$ & 0.116 &0.085& 0.103 &0.094& 0.101\\
&  $\sqrt{MSE}$ $\widehat d_N(10)$ & 0.139 &0.133& 0.148 & 0.146&0.156\\
&  $\sqrt{MSE}$ $\widehat d_N(30)$ &0.157 & 0.209&0.232 & 0.247&0.243 \\
\hline FARIMA$(1,d,1)$&  $\sqrt{MSE}$ $\widehat d_{MS}$ & 0.098 &\bf 0.092*& \bf 0.089* &\bf 0.088*&0.094 \\
&  $\sqrt{MSE}$ $\widehat d_{R}$ &\bf  0.093*&0.110&0.115 &0.110&\bf  0.089* \\
&  $\sqrt{MSE}$ $\widehat d_{W}$ & 0.108&0.120&0.113 &0.117&0.095 \\
&  $\sqrt{MSE}$ $\widetilde d_N^{(IR)}$ & 0.153 &0.131& 0.135 &0.138&0.123 \\
&  $\sqrt{MSE}$ $\widehat d_N(10)$ & 0.212 &0.188& 0.173 & 0.157&0.155\\
&  $\sqrt{MSE}$ $\widehat d_N(30)$ &0.197 & 0.228&0.250 & 0.265&0.280 \\
\hline  $X^{(D,D')}$, $D'=1$&  $\sqrt{MSE}$ $\widehat d_{MS}$ & 0.092 &\bf 0.089*& \bf 0.113* & \bf 0.107*&\bf 0.100* \\
&  $\sqrt{MSE}$ $\widehat d_{R}$ & 0.093 &0.111&0.129 &0.124&0.111 \\
&  $\sqrt{MSE}$ $\widehat d_{W}$ & 0.217 &0.209&0.211 &0.201&0.189 \\
&  $\sqrt{MSE}$ $\widetilde d_N^{(IR)}$ &\bf 0.075* &0.101& 0.121 &0.122&0.131 \\
&  $\sqrt{MSE}$ $\widehat d_N(10)$ &0.109  &0.143&0.163 & 0.168&0.180 \\
&  $\sqrt{MSE}$ $\widehat d_N(30)$ &0.109  &0.177&0.228 & 0.249&0.247 \\
\hline
\end{tabular}
\end{center}}
{\scriptsize
\begin{center}
$N=10^4~\longrightarrow $ ~\begin{tabular}{|c|c|c|c|c|c|c|}
\hline\hline
Model & $\sqrt{MSE}$& $d=-0.4$ & $d=-0.2$ & $d=0$ & $d=0.2$ & $d=0.4$ \\
\hline \hline
fGn~$(H=d+1/2)$& $\sqrt{MSE}$ $\widehat d_{MS}$ & 0.040 &0.031&0.032 &0.035&0.035 \\
&  $\sqrt{MSE}$ $\widehat d_{R}$ & 0.040 &0.027&0.029 &0.031&0.030 \\
&  $\sqrt{MSE}$ $\widehat d_{W}$ & 0.129 &0.045&0.026 &0.022& 0.020 \\
&  $\sqrt{MSE}$ $\widetilde d_N^{(IR)}$ &\bf 0.019*  &\bf 0.019*&\bf 0.017* & \bf 0.016*&\bf 0.019* \\
&  $\sqrt{MSE}$ $\widehat d_N(10)$ & 0.036  &0.038&0.049 & 0.043&0.048 \\
&  $\sqrt{MSE}$ $\widehat d_N(30)$ &0.043  &0.070& 0.086 & 0.081&0.076 \\
\hline  FARIMA$(0,d,0)$& $\sqrt{MSE}$ $\widehat d_{MS}$ & 0.036 &0.030&0.031 &0.035&0.032 \\
&  $\sqrt{MSE}$ $\widehat d_{R}$ & 0.031 &0.028&0.027 &0.029&0.029 \\
&  $\sqrt{MSE}$ $\widehat d_{W}$ & \bf 0.020*&\bf 0.018*& 0.023 &0.025&\bf 0.028* \\
&  $\sqrt{MSE}$ $\widetilde d_N^{(IR)}$ & 0.066 &0.031& \bf 0.018* &\bf 0.020*&\bf  0.028*\\
&  $\sqrt{MSE}$ $\widehat d_N(10)$ & 0.076&0.047&0.043 & 0.053&0.038 \\
&  $\sqrt{MSE}$ $\widehat d_N(30)$ &0.074  &0.085&0.073 & 0.086&0.073 \\
\hline FARIMA$(1,d,1)$&  $\sqrt{MSE}$ $\widehat d_{MS}$ & 0.035 & 0.033&  0.032 & 0.036&0.031 \\
&  $\sqrt{MSE}$ $\widehat d_{R}$ &\bf  0.031* & \bf 0.029*& \bf  0.030* & \bf 0.032*&\bf  0.027* \\
&  $\sqrt{MSE}$ $\widehat d_{W}$ & 0.054&0.054&0.050 &0.052&0.048 \\
&  $\sqrt{MSE}$ $\widetilde d_N^{(IR)}$ & 0.099 &0.066& 0.052 &0.047&0.046 \\
&  $\sqrt{MSE}$ $\widehat d_N(10)$ & 0.141  &0.095& 0.075 & 0.055&0.051 \\
&  $\sqrt{MSE}$ $\widehat d_N(30)$ &0.111  &0.085&0.094 & 0.090&0.074 \\
\hline  $X^{(D,D')}$, $D'=1$&  $\sqrt{MSE}$ $\widehat d_{MS}$ & 0.029 & \bf 0.037*&\bf 0.035* &\bf 0.041*&\bf 0.038* \\
&  $\sqrt{MSE}$ $\widehat d_{R}$ & 0.032 &0.041&0.037 &\bf 0.041*&0.039 \\
&  $\sqrt{MSE}$ $\widehat d_{W}$ & 0.110 &0.115&0.115 &0.112&0.114 \\
&  $\sqrt{MSE}$ $\widetilde d_N^{(IR)}$ &\bf 0.018* & 0.064& 0.092 &0.084&0.081 \\
&  $\sqrt{MSE}$ $\widehat d_N(10)$ &0.035  &0.093& 0.102 & 0.106&0.094 \\
&  $\sqrt{MSE}$ $\widehat d_N(30)$ & 0.039  &0.088&0.084 & 0.074&0.077 \\
\hline
\end{tabular}
\end{center}}
\caption {\label{Table4} \it Comparison of the different log-memory parameter estimators for processes of the benchmark.
For each process and value of $d$ and $N$, $\sqrt{MSE}$ are computed from $100$ independent generated samples. }
\end{table}
\newline
\noindent {\it Conclusions from Table \ref{Table4}:} The adaptive IR estimator $\widetilde d_N^{(IR)}$ numerically shows a convincing convergence rate with respect to the other estimators. \\
The estimators $\widehat d_N(10)$ and $\widehat d_N(30)$ are clearly the worst estimators of $d$. This can be explained by two facts:
1/ the numerical expression of the matrix $\widehat \Sigma_N(m)$ is almost a diagonal matrix: therefore a least square regression using several window lengths provides better estimations than an estimator using only one window length; 2/ $\widehat d_N(10)$ and $\widehat d_N(30)$ use a fixed window length ($m=10$ and $m=30$) for any process and $N$ while we know that $m\simeq N^{\alpha^*}$ is the optimal choice which is approximated by $\widetilde m_N$. \\
Both the ``spectral'' estimator $\widehat d_R$ and $\widehat d_{MS}$ provide more stable results that do not depend very much on $d$ and the process, while the wavelet based estimator $\widehat d_{W}$ and $\widetilde d_N^{(IR)}$ are more sensible to the flatness of the spectral density. But, especially for ``smooth processes'' (fGn and FARIMA$(0,d,0)$), $\widetilde d_N^{(IR)}$ is a very accurate semiparametric estimator and is globally more efficient than the other estimators.\\
\newline
{\bf Robustness of the different semiparametric estimators:} To
conclude with the  numerical properties of the estimators, five different
processes not satisfying Assumption $S(d,\beta)$ are considered:
\begin{itemize}
\item a FARIMA$(0,d,0)$ process with innovations satisfying a uniform law;
\item a FARIMA$(0,d,0)$ process with innovations satisfying a symmetric Burr distribution with cumulative distribution function $F(x)=1-\frac 1 2 \frac 1 {1+x^2}$ for $x \geq 0$ and $F(x)=\frac 1 2 \frac 1 {1+x^2}$ for $x \leq 0$ (and therefore
$\E |X_i|^2=\infty$ but $\E |X_i|<\infty$);
\item a FARIMA$(0,d,0)$ process with innovations satisfying a symmetric Burr distribution with cumulative distribution function $F(x)=1-\frac 1 2  \frac 1 {1+|x|^{3/2}}$ for $x \geq 0$ and $F(x)=\frac 1 2 \frac 1 {1+|x|^{3/2}}$ for $x \leq 0$ (and therefore
$\E |X_i|^2=\infty$ but $\E |X_i|<\infty$);
%\item a FARIMA$(0,d,0)$ process with innovations satisfying a Cauchy distribution (thus $\E |X_i|=\infty$);
\item a Gaussian stationary process  with a spectral density $f(\lambda)=||\lambda|-\pi/2|^{-2d}$
for all $\lambda\in [-\pi,\pi] \setminus \{-\pi/2,\pi/2\}$: this is a GARMA$(0,d,0)$ process. The
local behavior of $f$ in $0$ is $f(|\lambda|) \sim (\pi/2)^{-2d}\,
|\lambda|^{-2d}$ with $d=0$, but the smoothness condition for $f$ in
Assumption $S(0,\beta)$ is not satisfied.
\item a trended fGn with parameter $H=d+0.5$ and an additive linear trend;
\item a fGn ($H=d+0.5$) with an additive linear trend	and an additive sinusoidal seasonal component of period $T=12$.
\end{itemize}
The results of these simulations are given in {Table} {\ref{Table5}}.\\
\begin{table}[p] {\scriptsize
\begin{center}
$N=10^3~\longrightarrow $ ~\begin{tabular}{|c|c|c|c|c|c|c|}
\hline\hline
Model+Innovation & $\sqrt{MSE}$& $d=-0.4$ & $d=-0.2$ & $d=0$ & $d=0.2$ & $d=0.4$ \\
\hline \hline
FARIMA$(0,d,0)$ Uniform & $\sqrt{MSE}$ $\widehat d_{MS}$ & 0.189 &0.090&0.091 &\bf 0.082*&0.092 \\
&  $\sqrt{MSE}$ $\widehat d_{R}$ & 0.171 &0.104&0.109 &0.102&\bf 0.086*\\
&  $\sqrt{MSE}$ $\widehat d_{W}$ &\bf 0.111* &\bf 0.066*&\bf 0.072* &0.118& 0.129 \\
&  $\sqrt{MSE}$ $\widetilde d_N^{(IR)}$ & 0.186  & 0.081& 0.083 & 0.112&0.093 \\
\hline FARIMA$(0,d,0)$ Burr ($\alpha=2$) & $\sqrt{MSE}$ $\widehat d_{MS}$ & 0.174 &0.087&0.092 &0.084&\bf 0.091* \\
&  $\sqrt{MSE}$ $\widehat d_{R}$ & 0.183 &0.104&0.097 &0.107&0.079 \\
&  $\sqrt{MSE}$ $\widehat d_{W}$ & \bf 0.149* &\bf 0.086*& 0.130 &0.101&0.129 \\
&  $\sqrt{MSE}$ $\widetilde d_N^{(IR)}$ & 0.221 &0.119&\bf 0.076* &\bf 0.082*& 0.139\\
\hline FARIMA$(0,d,0)$ Burr ($\alpha=3/2$) & $\sqrt{MSE}$ $\widehat d_{MS}$ &  0.188 &\bf 0.087*&\bf 0.063* &\bf 0.099*& 0.075 \\
&  $\sqrt{MSE}$ $\widehat d_{R}$ & \bf 0.183* &0.110&0.079 &0.125&\bf 0.072* \\
&  $\sqrt{MSE}$ $\widehat d_{W}$ &  0.219 & 0.108& 0.138 &0.146&0.159 \\
&  $\sqrt{MSE}$ $\widetilde d_N^{(IR)}$ & 0.264 &0.134&0.094 & 0.155& 0.187\\
%\hline FARIMA$(0,d,0)$ Cauchy & $\sqrt{MSE}$ $\widehat d_{MS}$ & 0.202 &\bf 0.080*& \bf 0.069* & 0.108&0.123 \\
%&  $\sqrt{MSE}$ $\widehat d_{R}$ &\bf  0.197*&0.093&0.088 &\bf 0.090*&\bf  0.078* \\
%&  $\sqrt{MSE}$ $\widehat d_{W}$ & 0.287&0.228&0.298 &0.221&0.180 \\
%&  $\sqrt{MSE}$ $\widetilde d_N^{(IR)}$ & 0.437 &0.256& 0.104 &0.336&0.465 \\
\hline  GARMA$(0,d,0)$ &  $\sqrt{MSE}$ $\widehat d_{MS}$ & 0.149 &0.109&  0.086 &  0.130& 0.172 \\
&  $\sqrt{MSE}$ $\widehat d_{R}$ & \bf 0.098* &0.104&0.090 &0.132&\bf 0.125* \\
&  $\sqrt{MSE}$ $\widehat d_{W}$ & 0.117 &\bf 0.074*&\bf 0.081* &0.182&0.314 \\
&  $\sqrt{MSE}$ $\widetilde d_N^{(IR)}$ &0.124 &0.121& 0.110 &\bf 0.102*&0.331 \\
\hline  Trend &  $\sqrt{MSE}$ $\widehat d_{MS}$ & 1.307 &0.891&  0.538 &  0.290& 0.150 \\
&  $\sqrt{MSE}$ $\widehat d_{R}$ & 0.900 &0.700&0.498 &0.275&0.087 \\
&  $\sqrt{MSE}$ $\widehat d_{W}$ & \bf 0.222* & \bf 0.103*&0.083 &  0.071&\bf 0.059* \\
&  $\sqrt{MSE}$ $\widetilde d_N^{(IR)}$ & 1.65 & 0.223&\bf  0.079* &\bf 0.050*&  0.076 \\
\hline Trend + Seasonality &  $\sqrt{MSE}$ $\widehat d_{MS}$ & 1.178 &0.803& 0.477 & 0.238& 0.123 \\
&  $\sqrt{MSE}$ $\widehat d_{R}$ & 0.900 &0.700&0.498 &0.284&\bf 0.091* \\
&  $\sqrt{MSE}$ $\widehat d_{W}$ & \bf 0.628* &\bf 0.407*&0.318 &0.274&0.283 \\
&  $\sqrt{MSE}$ $\widetilde d_N^{(IR)}$ &1.54 &1.01& \bf 0.311* &\bf 0.158*&0.145 \\
\hline
\end{tabular}
\end{center}}
{\scriptsize
\begin{center}
$N=10^4~\longrightarrow $ ~\begin{tabular}{|c|c|c|c|c|c|c|}
\hline\hline
Model+Innovation & $\sqrt{MSE}$& $d=-0.4$ & $d=-0.2$ & $d=0$ & $d=0.2$ & $d=0.4$ \\
\hline \hline
FARIMA$(0,d,0)$ Uniform & $\sqrt{MSE}$ $\widehat d_{MS}$ & 0.177 &0.039&0.033 &0.034&0.034 \\
&  $\sqrt{MSE}$ $\widehat d_{R}$ & 0.171 &0.032&0.030 &0.028& \bf 0.032*\\
&  $\sqrt{MSE}$ $\widehat d_{W}$ &\bf  0.125* &\bf 0.027*&0.025 & 0.028&  0.035 \\
&  $\sqrt{MSE}$ $\widetilde d_N^{(IR)}$ & 0.165  & 0.042&\bf 0.017* &\bf  0.027*&\bf 0.032* \\
\hline FARIMA$(0,d,0)$ Burr ($\alpha=2$) & $\sqrt{MSE}$ $\widehat d_{MS}$ & 0.180 &0.036&0.041 &0.033&0.032 \\
&  $\sqrt{MSE}$ $\widehat d_{R}$ & 0.169 &\bf 0.031*&0.030 &\bf 0.031*&\bf 0.029* \\
&  $\sqrt{MSE}$ $\widehat d_{W}$ & \bf 0.138* &0.068& 0.065 &0.076&0.066 \\
&  $\sqrt{MSE}$ $\widetilde d_N^{(IR)}$ & 0.219 &0.067& \bf 0.018* & 0.039& 0.074\\
\hline FARIMA$(0,d,0)$ Burr ($\alpha=3/2$) & $\sqrt{MSE}$ $\widehat d_{MS}$ & 0.18 &0.038&\bf 0.026* &0.030&\bf 0.021* \\
&  $\sqrt{MSE}$ $\widehat d_{R}$ & 0.174&\bf 0.033*&0.031 &\bf 0.023*&0.023 \\
&  $\sqrt{MSE}$ $\widehat d_{W}$ & \bf 0.126* &0.058& 0.149 &0.124&0.090 \\
&  $\sqrt{MSE}$ $\widetilde d_N^{(IR)}$ & 0.264 &0.113& 0.030 & 0.099& 0.159\\
%\hline FARIMA$(0,d,0)$ Cauchy &  $\sqrt{MSE}$ $\widehat d_{MS}$ & 0.175 &\bf 0.028*& \bf 0.013* & 0.053&0.092 \\
%&  $\sqrt{MSE}$ $\widehat d_{R}$ &\bf  0.169*&0.025&\bf 0.013* & \bf 0.033*&\bf  0.043* \\
%&  $\sqrt{MSE}$ $\widehat d_{W}$ & 0.185&0.117&0.195 &0.200&0.207 \\
%&  $\sqrt{MSE}$ $\widetilde d_N^{(IR)}$ & 0.411 &0.249& 0.070 &0.298&0.418 \\
\hline  GARMA$(0,d,0)$ &  $\sqrt{MSE}$ $\widehat d_{MS}$ & 0.063 &0.041& 0.028 &  0.032&0.060 \\
&  $\sqrt{MSE}$ $\widehat d_{R}$ &\bf  0.037* &\bf 0.033*&0.025 &\bf 0.026* &\bf 0.030* \\
&  $\sqrt{MSE}$ $\widehat d_{W}$ & 0.061 &0.052&0.021 &0.078&0.081 \\
&  $\sqrt{MSE}$ $\widetilde d_N^{(IR)}$ & 0.074 & 0.040& \bf 0.016* &0.055&0.109 \\
\hline  Trend &  $\sqrt{MSE}$ $\widehat d_{MS}$ & 1.16 &0.785& 0.450 & 0.171& 0.072 \\
&  $\sqrt{MSE}$ $\widehat d_{R}$ & 0.900 &0.700&0.431 &0.192&0.067 \\
&  $\sqrt{MSE}$ $\widehat d_{W}$ & 0.135 &0.046&\bf 0.021* &0.019&0.021 \\
&  $\sqrt{MSE}$ $\widetilde d_N^{(IR)}$ &\bf 0.019* &\bf 0.021*& \bf 0.021* &\bf 0.016*&\bf 0.020* \\
\hline Trend + Seasonality &  $\sqrt{MSE}$ $\widehat d_{MS}$ & 1.219 & 0.841& 0.474 & 0.194& 0.099 \\
&  $\sqrt{MSE}$ $\widehat d_{R}$ & 0.900 &0.700&0.431 &0.189&0.063 \\
&  $\sqrt{MSE}$ $\widehat d_{W}$ & \bf 0.097* &\bf 0.073*&0.063 &0.065& 0.051 \\
&  $\sqrt{MSE}$ $\widetilde d_N^{(IR)}$ &0.671 &0.382&\bf  0.049* &\bf 0.047*&\bf 0.041* \\
\hline
\end{tabular}
\end{center}}
\caption {\label{Table5} \it Comparison of the different log-memory parameter estimators for processes of the benchmark.
For each process and value of $d$ and $N$, $\sqrt{MSE}$ are computed from $100$ independent generated samples. }
\end{table}
\newline
\noindent ~\\
{\it Conclusions from {Table} {\ref{Table5}}:}  The main advantage of $\widehat d_{W}$ and $\widetilde d_N^{(IR)}$ with respect to $\widehat d_{MS}$ and $\widehat d_{R}$ is exhibited in this table: they are robust with respect to smooth trends, especially in the case of long memory processes ($d>0$). This has already been observed in Bruzaite and Vaiciulis (2008) for IR statistic (and even for certain discontinuous trends). Both those estimators are also robust with respect to seasonal component and this robustness would have been improved if we had chosen $m$ (or scales) as a multiple of the period (which is generally known). \\
The second good surprise of these simulations is that the adaptive IR estimator $\widetilde d_N^{(IR)}$ is also consistent for non Gaussian distributions even if the function $\Lambda$ in (\ref{Lambdad}) and therefore all our results are typically obtained for Gaussian distributions. The case of finite-variance processes is not surprising (see Remark 2). But this is more surprising for infinite variance processes. A first explanation of this was given in Surgailis {\it et al.} (2007) in the case of i.i.d.r.v. in the
domain of attraction of a stable law with index $0<\alpha<2 $ and skewness parameter $-1\leq \beta \leq  1$: they concluded that $IR_N(m)$ converges to almost the same limit. The extension to $\alpha$-stable linear processes of this first explanation should require technical developments but the expression of the IR statistic (which is bounded in $[0,1]$ for any processes) could allow to apply it to infinite variance processes.   Note that the other semiparametric estimators  are also consistent in such frame with faster convergence rates notably for the local Whittle estimator.

\section{Proofs}\label{proofs}
\begin{proof}[Proof of Property \ref{MCLT}]
We proceed in two steps. \\
{\bf Step 1:} First, we compute the limit of $\frac{N}{m}\,\Cov \big (IR_N(jm), IR_N(j'm) \big)$ when $N,~m$ and $N/m \to \infty$. As in Surgailis {\it et al} (2007), define also for all $j=1,\cdots,p$ and $k=1,\cdots, N-3m_j$ (with $m_j=jm$):
%Z_{d}(\tau):=\frac{1}{\sqrt{|4^{d+0.5}-4|}}
%\begin{cases}
%\Delta^{2}B_{d+0.5}(\tau)~~~~~~~~~~~~~~~~~~~~~~~~~~~~~~~~~~~~-0.5<d<0.5\\
%\sqrt{2d(2d+1}\int_{0}^{1}\Delta B_{d-0.5}(\tau+s)~ds~~~~~~~~~~~~0.5<d<1.5\\
%\end{eqnarray*}\\
%Z_{d}(\tau)~~\sim~~~\mathcal{N}(0,1)~~~~~~,~~\forall~~d\in]-0.5,0.5[~~\cup~~]0.5,1.5[~\\ \\
%E[Z_{d}(\tau)~Z_{d}(\tau')]=\frac{\{6|\tau-\tau'|^{2d+1}-4|\tau-\tau'+1|^{2d+1}-4|\tau-\tau'-1|^{2d+1}+|\tau-\tau'+2|^{2d+1}+|\tau-\tau'-2|^{2d+1}\}}{2(4^{d+0.5}-4)} %\\
\begin{eqnarray}\label{Yjk}
Y_{m_j}(k)&:=&\frac{1}{V_{m_j}}\sum_{t=k+1}^{k+m_{j}}(X_{t+m_{j}}-X_{t})~~,~\mbox{with}~~ V_{m_j}^{2}:=\E \Big [ \Big (\sum_{t=k+1}^{k+m_{j}}(X_{t+m_{j}}-X_{t})\Big )^2\Big ]\\
\mbox{and}\quad \eta_{m_j}(k)&:=&\frac{|Y_{m_j}(k)+Y_{m_j}(k+m_{j})|}{|Y_{m_j}(k)|+|Y_{m_j}(k+m_{j})|}.
\end{eqnarray}
Note that $Y_{m_j}(k) \sim {\cal N}(0,1)$ for any $k$ and $j$ and
$$
IR_N(m_j)=\frac{1}{N-3m_{j}}\sum_{k=0}^{N-3m_{j}-1} \eta_{m_j}(k)\quad \mbox{for 	all}~j=1,\cdots p.
$$
%Therefore
%\begin{eqnarray*}
%(\frac{N}{m_{j}})~Var(IR_N(m_j))&=& \frac{N}{N-3m_{j}}\sum_{|k|=0}^{N-3m_{j}-1}\frac{1}{m_{j}}~(1-\frac{|k|}{N-3m_{j}})~\Cov(\eta_j(0),\eta_j(k))\\
%&=&2~\frac{N/m_{j}}{N/m_{j}-3}\int_{\tau=0}^{N/m_{j}-3}(1-\frac{\tau}{N/m_{j}-3})~\Cov(\eta_j(0),\eta_j([m_{j}\tau])~d\tau
%\end{eqnarray*}
%From (1.9) of Surgailis {\it et al.} (2008),
%$$\int_{\tau=0}^{N/m_{j}-3}(1-\frac{|\tau|}{N/m_{j}-3}) \Cov(\eta_j(0),\eta_j([m_{j}\tau])~d\tau \limiteN \sigma^{2}(d)$$
%Hence:
%\begin{equation}
%\frac{N}{m_{j}}~Var(IR_N(m_j))~~~~\underset{N/m_{j},m_{j}\rightarrow\infty %}{\longrightarrow}~~~\sigma^{2}(d)~~~~~~~~;~~~\forall~j=1,\cdots.p~~,~~
%\end{equation}
%As $m_{j}~=~m\times j$ ~~~~then \\
%\begin{equation}
%\frac{N}{m}~Var(IR_N(m_j))~~~~\underset{N/m,m\rightarrow\infty }{\longrightarrow}~~j~\sigma^{2}(d)~~~~~~~
%\label{varianceIRp}
%\end{equation}
%\subsection{The non diagonal Covariance matrix elements of an IR vector with p components :}
%The Covariance of a vector with p components , by definition , is :

\begin{eqnarray*}
\Cov(IR_N(m_j),IR_N(m_{j'}))&=&\frac{1}{N-3m_{j}}~\frac{1}{N-3m_{j'}} \, \sum_{k=0}^{N-3m_{j}-1}\sum_{k'=0}^{N-3m_{j'}-1}\Cov(\eta_{m_j}(k),\eta_{m_{j'}}(k')))\\
&=&\frac{1}{(\frac N {m_{j}}-3)(\frac N {m_{j'}}-3)}\int_{\tau=0}^{\frac {N-1} {m_{j}}-3}\int_{\tau'=0}^{\frac {N-1} {m_{j'}}-3}\Cov(\eta_{m_j}([m_{j}\tau]),\eta_{m_{j'}}([m_{j'}\tau'])))\,d\tau \,d\tau'.
\end{eqnarray*}
Now according to (5.20) of the same article, with $\longrightarrow_{FDD}$ denoting the finite distribution convergence when $m\to \infty$,
\begin{eqnarray*}
Y_m([m\tau])~\longrightarrow_{FDD}~Z_{d}( \tau)
\end{eqnarray*}
where $Z_d$ is defined in (\ref{Zd}). Now
\begin{eqnarray*}
Y_{jm}(k)&=&\frac{1}{V_{m_j}}  \sum_{t=1}^{jm}X_{t+jm+1}-\sum_{t=1}^{jm}X_{t+1}X_{t}) \\
& =& \frac{1}{V_{m_j}} \sum_{i=-(j-1)}^{j-1} (j-|i|) V_m \, Y_m(t+(j+i-1)m). 
\end{eqnarray*}
But $V^2_m \sim c_0 V(d) m^{2d+1}$ when $m\to \infty$ (see (2.20) in Surgailis {\it et al}, 2007). Therefore we obtain
$Y_{jm}([mj\tau]) \sim \frac 1 {j^{d+1/2}} \, \sum_{i=-(j-1)}^{j-1} (j-|i|) Y_m([mj\tau]+(j+i-1)m)$ when $m\to \infty$ (in distribution) and more generally,
\begin{multline}\label{newcov}
\hspace{-0.3cm}\big (Y_{jm}([mj\tau]) , Y_{j'm}([mj'\tau']) ~\longrightarrow_{FDD}~\\ \Big ( \frac 1 {j^{d+1/2}} \sum_{i=-(j-1)}^{j-1} (j-|i|) Z_d(j\tau+j+i-1)\, , \, \frac 1 {(j')^{d+1/2}}  \sum_{i'=-(j'-1)}^{j'-1} (j'-|i'|) Z_d(j'\tau'+j'+i'-1)\Big ),
\end{multline}
when $m\to \infty$. Hence, obvious computations lead to define for $t\in \R$ 
\begin{eqnarray}\label{Zdj}
Z^{(j)}_d(t)&:= & \hspace{-0.5cm}\sum_{i=-(j-1)}^{j-1} (j-|i|) Z_d(t+j+i-1)=\frac{B_{d+0.5}(t+2j)-2B_{d+0.5}(t+j)+B_{d+0.5}(t)}
{\sqrt{|4^{d+0.5}-4|}} \\
\label{gammaj} \gamma^{(j,j')}_d(t)&:=&\Cov\big (\psi(Z_{d}^{(j)}(0),Z_{d}^{(j)}(j)),\psi(Z_{d}^{(j')}(t),Z_{d}^{(j')}(t+j' ))\big ).
\end{eqnarray}
Now, as the function $\psi(x,y)=\frac{|x+y|}{|x|+|y|}$ is a continuous (on $\R^2\setminus \{0,0\}$) and bounded function (with $0\leq\psi(x,y)\leq 1$)
and since $\eta_{m_j}([mj\tau])=\psi(Y_{m_j}([m_j\tau]),Y_{m_j}([m_j(\tau+1)]))$,
then from (\ref{newcov}),
\begin{eqnarray*}
\Cov\big (\eta_{m_j}([m_{j}\tau]),\eta_{m_{j'}}([m_{j'}\tau'])\big )&\limitem &\Cov\big  ( \psi ( Z^{(j)}_d(j\tau),Z^{(j)}_d(j(\tau+1))),\psi(Z^{(j')}_d(j'\tau'),Z^{(j')}_d(j'(\tau'+1)))\big )\\
&\limitem & \gamma^{(j,j')}_d(j'\tau'-j\tau),
%\\ &\limitem &\Cov\big (\psi(Z_{d}(0),Z_{d}(1)),\psi(Z_{d}(\frac {j'}j \, \tau'-\tau),Z_{d}(\frac {j'}j \, \tau'-\tau+\frac {j'}j))\big )
\end{eqnarray*}
using the stationarity of the process $Z_d$ and therefore of processes $Z_d^{(j)}$ and $Z_d^{(j')}$. Hence, when $N,\, m$ and $N/m\to \infty$,
\begin{eqnarray}\label{equacov}
\nonumber  \frac N m \, \Cov(IR_N(j  m),IR_N(j' m)) &   \sim &  \frac{N}{m(\frac N{jm}-3)(\frac N {j'm}-3)}  \\
\nonumber  &  &  \hspace{-0.2cm} \times  \int_{0}^{\frac {N-1}{jm}-3} \hspace{-0.2cm}  \int_{0}^{\frac {N-1} {j'm}-3} \hspace{-0.6cm}   \Cov\big (\psi(Z_{d}^{(j)}(j \, \tau),Z_{d}^{(j)}(j \, \tau+j)),\psi(Z_{d}^{(j')}(j' \, \tau'),Z_{d}^{(j')}(j' \, \tau'+j' ))\big )d\tau d\tau' \\
\nonumber &\sim & \frac{mN}{(N-3jm)(N -3j'm)}\, \int_{0}^{\frac {N-1}{m}-3j}\int_{0}^{\frac {N-1}{m}-3j'}\gamma^{(j,j')}_d(s'-s)\,ds\, ds' \\
\nonumber & \sim & \frac{m}{N}\, \int_{-\frac {N}{m}}^{\frac {N}{m}}\big (\frac {N}{m}-|u| \big )\,  \gamma^{(j,j')}_d(u)\,du \\
&\longrightarrow & \int_{-\infty}^{\infty}\gamma^{(j,j')}_d(u)\,du=:\sigma_{j,j'}(d).
\end{eqnarray}
This last limit is obtained, {\em mutatis mutandis}, from the relation (5.23) Surgailis {\it et al} (2007), and thus
$\gamma^{(j,j')}_d(u)=C\, (u^{-2}\wedge 1)$, implying $ \frac{m}{N}\, \int_{-\frac {N}{m}}^{\frac {N}{m}} |u| \,  \gamma^{(j,j')}_d(u)\,du \limiteNm  0$.
%\begin{equation*}\label{equanalyse}
%\int_{0}^{a}\int_{0}^{b}f(|x-y|)\,dx\,dy=\int_{0}^{a}(a-u)f(u)\,du +\int_{0}^{b}(b-u)f(u)\,du-\int_{0}^{a-b}(a-b-u)f(u)\,du,\quad \mbox{for all}\quad a\geq b,
%\end{equation*}
It achieves the first step of the proof.\\
~\\
{\bf Step 2:} It remains to prove the multidimensional central limit theorem. Then consider a linear combination of $(IR_N(m_j))_{1 \leq j \leq p}$, {\it i.e.} $\sum_{j=1}^p u_j \, IR_N(m_j)$ with $(u_1,\cdots,u_p) \in \R^p$. For ease of notation, we will restrict our purpose to $p=2$, with $m_i=r_i m$ where $r_1\leq r_2$ are fixed positive integers. Then with the previous notations and following the notations and results of Theorem 2.5 of Surgailis {\it et al.} (2007):
\begin{multline*}
u_1 \, IR_N(r_1m)+u_2 \, IR_N(r_2m)= u_1 (\E [IR_N(r_1m)] +S_K(r_1m) +\widetilde S_K(r_1m))\\
+u_2 (\E [IR_N(r_2m)] +S_K(r_2m) +\widetilde S_K(r_2m)).
\end{multline*}
From (5.31) of Surgailis {\it et al.} (2007), we have $\widetilde S_K(m_1)=o(S_K(m_1))$ and $\widetilde S_K(m_2) = o(S_K(m_2))$ when $K\to \infty$ and from an Hermitian decomposition  $(N/m)^{1/2}(u_1 S_K(m_i)+u_2 S_K(m_2)) \to_D {\cal N}(0,\gamma_K^2)$ as $N$, $m$ and $N/m \to \infty$ since the cumulants of $(N/m)^{1/2}(u_1 S_K(m_i)+u_2 S_K(m_2))$ of order greater or equal to $3$ converge to $0$ (since this result is proved for each $S_K(m_i)$). Moreover, from the previous computations, $\gamma_K^2 \to (u_1^2\sigma_{ r_1, r_1}(d) +2 u_1u_2 \sigma_{ r_1, r_2}(d) + u_2^2 \sigma_{ r_2, r_2}(d))$ when $K \to \infty$. Therefore the multidimensional central limit theorem is established.
%\begin{eqnarray*} %  \frac {u_1} {N-3m_1}\hspace{-4mm} \sum_{k=0}^{N-3m_1-1}\hspace{-4mm}  \psi(Y_1(k),Y_1(k+m_1))+\frac {u_2}{N-3m_2} \hspace{-4mm}  \sum_{k=0}^{N-3m_2-1} \hspace{-4mm}  \psi(Y_2(k),Y_2(k+m_2)) \\
%&& \hspace{-3cm}=\frac {1} {N-3m_1}  \hspace{-4mm} \sum_{k=0}^{N-3m_1-1}\hspace{-4mm} h_n(Z_1(k),Z_2(k),Z_3(k),Z_4(k))+\frac {u_2}{N-3m_2} \hspace{-1mm}  \sum_{k=N-3m_1}^{N-3m_2-1} \hspace{-4mm}  \psi(Y_2(k),Y_2(k+m_2))\\
%&&\hspace{0cm}+\frac {3(m_2-m_1)u_2}{(N-3m_1)(N-3m_2)} \hspace{-4mm}  \sum_{k=0}^{N-3m_2-1} \hspace{-4mm}  \psi(Y_2(k),Y_2(k+m_2)),
%\end{eqnarray*}
%with $Z_1(k)=Y_1(k)/V_{m_1}(k)$
\end{proof}
\begin{property}\label{devEIR}
Let $X$ satisfy Assumption $S(d,\beta)$ with $-0.5<d<0.5$ and $\beta>0$. Then, there exists a constant $K(d,\beta)<0$ depending only on $d$ and $\beta$ such as\\
\begin{tabular}{lcll}
$\E \big [IR_N(m)\big ]$&$=$&
$\Lambda_0(d)+K(d,\beta)\times m^{-\beta}+O\big(m^{-\beta-\varepsilon}+m^{-2d-1}\log(m)\big)$ & if $-2d+\beta<1$, \\
&$ = $&$ \Lambda_0(d)+ K(d,\beta) \times m^{-\beta}\, \log(m)~~+O\big(m^{-\beta}\big)$
& if  $-2d+\beta=1$;\\
&$=$&$\Lambda_0(d)+O\big(m^{-2d-1}\big)$&if  $-2d+\beta>1$.\\
\end{tabular}
\end{property}
\begin{proof}[Proof of Property \ref{devEIR}]
As in Surgailis {\it et al} (2007), we can write:
\begin{eqnarray} \label{RmVm}
\nonumber \E \big [ IR_N(m) \big ]=\E\big(\frac{|Y^{0}+Y^{1}|}{|Y^{0}|+|Y^{1}|}\big)=\Lambda(\frac{R_{m}}{V_{m}^{2}}) \quad
\mbox{with}\quad
\frac{R_{m}}{V_{m}^{2}}:=1-2 \, \frac{\int_{0}^{\pi}f(x)\frac{\sin^{6}(\frac{mx}{2})}{\sin^{2}(\frac{x}{2})}dx~}{\int_{0}^{\pi}f(x)
\frac{\sin^{4}(\frac{mx}{2})}{\sin^{2}(\frac{x}{2})}dx}.
\end{eqnarray}
Therefore an expansion of $R_{m}/V_{m}^{2}$ will provide an expansion of $\E \big [ IR_N(m) \big ]$ when $m\to \infty$ and the multidimensional CLT (\ref{cltd}) will be deduced from Delta-method. \\
~\\
{\bf Step 1} Let $f$ satisfy Assumption $S(d,\beta)$. Then we are going to establish that there exist positive real numbers $C_{1}$ and $C_{2}$ specified in (\ref{C0C1}) and (\ref{C01C02}) and such that:
\begin{eqnarray*}
1.& \mbox{if $-1<-2d<1$ and $-2d+\beta<1$,} & \frac{R_{m}}{V_{m}^{2}}=\rho(d)~+~C_{1}(-2d,\beta)~~m^{-\beta}+O\big(m^{-\beta-\varepsilon}+m^{-2d-1}\log m\big) \\
2.& \mbox{if $-1<-2d<1$ and $-2d+\beta=1$,} & \frac{R_{m}}{V_{m}^{2}}=\rho(d)+C_{2}(1-\beta,\beta) ~m^{-\beta}\log m+O\big(m^{-\beta}\big) \\
3.&\mbox{if $-1<-2d<1$ and $-2d+\beta>1$,} & \frac{R_{m}}{V_{m}^{2}}=\rho(d)+O\big(m^{-2d-1}\big).
\end{eqnarray*}
Indeed under Assumption $S(d,\beta)$ and with $J_j(a,m),\, j=4,6,$ defined in (\ref{Jj}) of Lemma \ref{lemma46} (see below), it is clear that,
$$
\frac{R_{m}}{V_{m}^{2}}=1-2\, \frac{J_6(-2d,m)+ \frac{c_{1}}{c_{0}}J_6(-2d+\beta,m) +O(J_6(-2d+\beta+\varepsilon))}
{J_4(-2d,m)+ \frac{c_{1}}{c_{0}}J_4(-2d+\beta,m) +O(J_4(-2d+\beta+\varepsilon))},
$$
since $\displaystyle \int_{0}^{\pi}O(x^{-2d+\beta+\varepsilon})\frac{\sin^{j}(\frac{mx}{2})}{\sin^{2}(\frac{x}{2})}dx=O(J_j(-2d+\beta+\varepsilon))$ for $j=4,6$. Now we follow the results of Lemma \ref{lemma46}:\\
~\\
1. Let $-1<-2d+\beta<1$. Then for any $\varepsilon>0$,
\begin{eqnarray*}
\frac{R_{m}}{V_{m}^{2}}& \hspace{-3mm}=&\hspace{-3mm} 1\hspace{-1mm} -\hspace{-1mm}2\, \frac{
C_{61}(-2d)m^{1+2d}\hspace{-1mm}+\hspace{-1mm}C_{62}(-2d)\hspace{-1mm}+\hspace{-1mm}\frac{c_{1}}{c_{0}}\big (C_{61}(-2d+\beta)m^{1+2d-\beta}\hspace{-1mm}+\hspace{-1mm}C_{62}(-2d+\beta)\big )\hspace{-1mm}+\hspace{-1mm}O\big(m^{1+2d-\beta-\varepsilon}\hspace{-1mm}+\hspace{-1mm}\log m \big) }
{C_{41}(-2d)m^{1+2d}\hspace{-1mm}+\hspace{-1mm}C_{42}(-2d)\hspace{-1mm}+\hspace{-1mm}\frac{c_{1}}{c_{0}}\big (C_{41}(-2d+\beta)m^{1+2d-\beta}\hspace{-1mm}+\hspace{-1mm}C_{42}(-2d+\beta)\big )\hspace{-1mm}+\hspace{-1mm}O\big(m^{1+2d-\beta-\varepsilon}\hspace{-1mm}+\hspace{-1mm}\log m \big)}\\
&\hspace{-3mm}=&\hspace{-3mm}1\hspace{-1mm}-\hspace{-1mm}\frac{2}{C_{41}(-2d)}\Big[C_{61}(-2d)\hspace{-1mm}+\hspace{-1mm}\frac{c_{1}}{c_{0}}C_{61}(-2d+\beta)m^{-\beta}\Big]\Big[1\hspace{-1mm}-\hspace{-1mm}\frac{c_{1}}{c_{0}}\frac{C_{41}(-2d+\beta)}{C_{41}(-2d)}m^{-\beta}\Big]\hspace{-1mm}+\hspace{-1mm}O\big(m^{-\beta-\varepsilon}\hspace{-1mm}+\hspace{-1mm}m^{-2d-1}\log m \big)\\
&\hspace{-3mm}=&\hspace{-3mm}1\hspace{-1mm}-\hspace{-1mm}\frac{2C_{61}(-2d)}{C_{41}(-2d)}\hspace{-1mm}+\hspace{-1mm}2\frac{c_{1}}{c_{0}}\Big[\frac{C_{61}(-2d)C_{41}(-2d+\beta)}{C_{41}(-2d)C_{41}(-2d)}\hspace{-1mm}-\hspace{-1mm}\frac{C_{61}(-2d+\beta)}{C_{41}(-2d)}\Big]m^{-\beta}\hspace{-1mm}+\hspace{-1mm}O\big(m^{-\beta-\varepsilon}\hspace{-1mm}+\hspace{-1mm}m^{-2d-1}\log m \big).
\end{eqnarray*}
As a consequence, with $\rho(d)$ defined in (\ref{Lambda0d}) and $C_{j1}$ defined in Lemma \ref{lemma46},
\begin{multline}\label{C0C1}
\frac{R_{m}}{V_{m}^{2}}=\rho(d)~+~C_{1}(-2d,\beta)~~m^{-\beta}+~O\Big(m^{-\beta-\varepsilon}+m^{-2d-1}\log m \Big)\quad (m\to \infty),\quad \mbox{with} \\
C_{1}(-2d,\beta):=2 \, \frac{c_{1}}{c_{0}}\frac 1 {C^2_{41}(-2d)}\big[C_{61}(-2d)C_{41}(-2d+\beta)-C_{61}(-2d+\beta)
C_{41}(-2d)\big],
\end{multline}
and numerical experiments proves that $ C_{1}(-2d,\beta)/c_1$ is negative for any $d \in (-0.5,0.5)$ and $\beta>0$. \\
2. Let  $-2d+\beta=1$.\\
Again with Lemma \ref{lemma46},
\begin{eqnarray*}
\frac{R_{m}}{V_{m}^{2}}&=&1-2\frac{[C_{61}(-2d)m^{\beta}+C'_{61}\frac{c_{1}}{c_{0}}log(m\pi)+C_{62}(-2d)+\frac{c_{1}}{c_{0}}C'_{62}+O(1)]}{[C_{41}(-2d)m^{\beta}+C'_{41}\frac{c_{1}}{c_{0}}log(m\pi)+C_{42}(-2d)+\frac{c_{1}}{c_{0}}C'_{42}+O(1)]}\\
&=&1-\frac{2}{C_{41}(a)}\big[C_{61}(-2d)+\big(C'_{61}\frac{c_{1}}{c_{0}}\log(m)\big)m^{-\beta}\big]\big[1-\big(\frac{C'_{41}}{C_{41}(a)}\frac{c_{1}}{c_{0}}\log(m)\big)m^{-\beta}\big]+O\big(m^{-\beta}\big)\\
&=&1-\frac{2}{C_{41}(-2d)}\Big[C_{61}(-2d)-\frac{c_{1}}{c_{0}}\big(\frac{C_{61}(-2d)C'_{41}}{C_{41}(-2d)}-C'_{61}\big)\log(m)~ m^{-\beta}\Big]+O\big(m^{-\beta}\big).
\end{eqnarray*}
As a consequence,
\begin{multline}\label{C01C02}
\frac{R_{m}}{V_{m}^{2}}=\rho(d)~+~C_{2}(-2d,\beta)m^{-\beta}~\log m +O\big(m^{-\beta}\big)\quad (m\to \infty),\quad \mbox{with}\\
C_{2}(-2d,\beta):=2\, \frac{c_{1}}{c_{0}}\frac{1}{C^2_{41}(-2d)}\Big(C'_{41}C_{61}(-2d)-C'_{61}C_{41}(-2d)\Big ),
\end{multline}
and numerical experiments proves that $ C_{2}(-2d,\beta)/c_1$ is negative for any $d \in (-0.5,0.5)$ and $\beta=1-2d$.\\
3. Let $-2d+\beta>1$.\\
Once again with Lemma \ref{lemma46}:
\begin{eqnarray*}
\frac{R_{m}}{V_{m}^{2}}&=&1-2\frac{\big[C_{61}(-2d)m^{1+2d}+C_{62}(-2d)+\frac{c_{1}}{c_{0}}C_{61}''(-2d+\beta)+\frac{c_{1}}{c_{0}}C_{62}''(-2d+\beta)m^{1+2d-\beta}+O(1)\big]}{C_{41}(-2d)m^{1+2d}\big[1+\frac{C_{42}(-2d)}{C_{41}(-2d)}m^{-2d-1}+\frac{c_{1}}{c_{0}}\frac{C_{41}''(-2d+\beta)}{C_{41}(-2d)}m^{-2d-1}+\frac{c_{1}}{c_{0}}\frac{C_{42}''(-2d+\beta)}{C_{41}(-2d)}m^{-\beta}+O(m^{-2d-1})\big]}\\
&=&1-\frac{2}{C_{41}(-2d)}\big[C_{61}(-2d)+O\big(m^{-2d-1}\big)\big]\big[1-
O\big(m^{-2d-1}\big)\big]\\
&=&1-\frac{2C_{61}(-2d)}{C_{41}(-2d)}+O\big(m^{-2d-1}\big).
\end{eqnarray*}
Note that it is not possible to specify the second order term of this expansion as in both the previous cases. As a consequence,
\begin{eqnarray}\label{C03C4}
\frac{R_{m}}{V_{m}^{2}}=\rho(d)~+~O\big(m^{-2d-1}\big)\quad (m\to \infty).
\end{eqnarray}
{\bf Step 2:} A Taylor expansion of $\Lambda(\cdot)$ around $\rho(d)$ provides:
\begin{eqnarray}
\Lambda\Big(\frac{R_{m}}{V_{m}^{2}}\Big) \simeq \Lambda\big(\rho(d)\big)+\Big[\frac{\partial\Lambda}{\partial\rho}\Big](\rho(d))\Big(\frac{R_{m}}{V_{m}^{2}}-\rho(d)\Big)+\frac{1}{2} \, \Big[\frac{\partial^{2}\Lambda}{\partial\rho^{2}}\Big](\rho(d))\Big(\frac{R_{m}}{V_{m}^{2}}-\rho(d)\Big)^2.
\end{eqnarray}
Note that numerical experiments show that $\displaystyle \Big[\frac{\partial\Lambda}{\partial \rho}\Big](\rho)>0.2$ for any $\rho\in (-1,1)$.
As a consequence, using the previous expansions of $R_{m}/V_{m}^{2}$ obtained in Step 1 and since $\E \big [IR_N(m)\big ]=\Lambda\big(R_{m}/V_{m}^{2}\big)$, then
\begin{eqnarray*}
\E \big [IR_N(m)\big ]=\Lambda_{0}(d)+\left \{ \begin{array}{ll} c_1 \, C'_1(d,\beta)\,m^{-\beta}+O\big (m^{-\beta-\varepsilon}+m^{-2d-1}\log m +m^{-2\beta} \big )& \mbox{if}~\beta<1+2d \\
c_1 \, C'_2(\beta)m^{-\beta}\log m +O(m^{-\beta})& \mbox{if}~\beta=1+2d \\
O\big (m^{-2d-1}\big ) & \mbox{if}~\beta>1+2d
\end{array}\right . ,
\end{eqnarray*}
with $C'_1(d,\beta)<0$ for all $d\in (-0.5,0.5)$ and $\beta\in  (0,1+2d)$ and $C'_2(\beta)<0$ for all $0<\beta<2$. \\
\end{proof}

\begin{proof}[Proof of Theorem \ref{cltnada}] Using Property \ref{devEIR}, if $m \simeq C\, N^\alpha$ with $C>0$ and $(1+2\beta)^{-1} \wedge (4d+3)^{-1} <\alpha<1$ then $\sqrt{N/m}\, \big (\E \big [IR_N(m)\big ] -\Lambda_{0}(d)\big ) \limiteN 0$ and it implies that the multidimensional CLT (\ref{TLC1}) can be replaced by
\begin{eqnarray}\label{TLC1bis}
\sqrt{\frac{N}{m}}\Big (IR_N(m_j)-\Lambda_{0}(d)\Big )_{1\leq j \leq p}\limiteloiN {\cal N}(0, \Gamma_p(d)).
\end{eqnarray}
It remains to apply the Delta-method with the function $\Lambda_0^{-1}$ to CLT (\ref{TLC1bis}). This is possible since the function $d \to \Lambda_0(d)$ is an increasing function such that $\Lambda_0'(d)>0$ and $\big (\Lambda_0^{-1})'(\Lambda_0(d))=1/\Lambda'_0(d)>0$ for all $d\in (-0.5,0.5)$. It achieves the proof of Theorem \ref{cltnada}.
\end{proof}

\begin{proof}[Proof of Proposition \ref{testnada}] For ease of writing we will note  $\widehat \Sigma_N $ instead of $\widehat \Sigma_N(N^\alpha)$ in the sequel.  We have $\big (\widetilde d_N(m)-\widehat d_N(j \, m) \big )_{1\leq j \leq p}=\widehat M_N \big (\widehat d_N(j \, m) -d \big )_{1\leq j \leq p}  $ with $\widehat M_N$ the orthogonal (for the Euclidian norm $\| \cdot\|_{\widehat \Sigma_N}$) projector matrix on $\big ((1)_{1\leq i \leq p}\big )^{\perp} $ (which is a linear subspace with dimension $p-1$ included in $\R^p$) in $\R^p$, {\it i.e.} $\widehat M_N=J_p(J_p'\widehat \Sigma_N^{-1} J_p)^{-1} J_p' \widehat \Sigma_N^{-1}$. Now, by denoting $\Sigma_N^{1/2}$ a symmetric matrix such as $\Sigma_N^{1/2}\Sigma_N^{1/2}=\Sigma_N$,
\begin{eqnarray*}
\|\big (\widetilde d_N(m)-\widehat d_N(j \, m) \big )_{1\leq j \leq p} \|^2_{\widehat \Sigma_N}& =& \big (\widehat d_N(j \, m) -d \big )'_{1\leq j \leq p} \widehat M_N  \widehat \Sigma_N^{-1} \widehat M_N \big (\widehat d_N(j \, m) -d \big )_{1\leq j \leq p}\\
& =& Z_N'  \widehat \Sigma_N^{1/2} \widehat M_N  \widehat \Sigma_N^{-1} \widehat M_N \widehat \Sigma_N^{1/2} Z_N \\
& = & \big (\widehat A_N Z_N\big )'\big (\widehat A_N Z_N\big )
\end{eqnarray*}
with $\widehat A_N=\Sigma_N^{-1/2} \widehat M_N \widehat \Sigma_N^{1/2}$ and $Z_N$ a random vector such as $\sqrt {N/m} Z_N \limiteloiN {\cal N}_p(0,I_p)$ from Theorem \ref{cltnada}. But we also have $\widehat A_N=\Sigma_N^{-1/2}J_p(J_p'\widehat \Sigma_N^{-1} J_p)^{-1} J_p' \widehat \Sigma_N^{-1/2}=\widehat H_N(\widehat H'_N \widehat H_N)^{-1} \widehat H'_N$ with $\widehat H_N=\Sigma_N^{-1/2}J_p$ a matrix of size $(p\times (p-1))$ with rank $p-1$ (since the rank of $J_p$ is $(p-1)$). Hence $\widehat A_N$ is an orthogonal projector to the linear subspace of dimension $p-1$ generated by the matrix $\widehat H_N$. Now 
 using Cochran Theorem (see for instance Anderson and Styan, 1982), $\sqrt {N/m} \, \widehat A_N Z_N$ is asymptotically a Gaussian vector such as $N/m \big (\widehat A_N Z_N\big )'\big (\widehat A_N Z_N\big ) \limiteloiN \chi^2(p-1)$.
\end{proof}

\noindent In Property \ref{devEIR}, a second order expansion of $\E [IR_N(m)]$ can not be specified in the case $\beta>2d+1$. In the following Property \ref{devEIR2}, we show some inequalities satisfied by $\E [IR_N(m)]$ which will be useful for obtaining the consistency of the adaptive estimator in this case.
\begin{property}\label{devEIR2}
Let  $X$ satisfy Assumption $S(d,\beta)$ with $-0.5<d<0.5$, $\beta>1+2d$. Moreover, suppose that the spectral density of $X$ satisfies Condition (\ref{condbis1}) or (\ref{condbis2}). Then there exists a constant $L>0$ depending only on $c_0, c_1, c_2,  d,\beta,\varepsilon$ such that\\
\begin{equation}\label{EIRineg}
\big |\E \big [IR_N(m)\big ]-\Lambda_0(d) \big | \geq L \,  m^{-2d-1}.
\end{equation}
\end{property}
\begin{proof}[Proof of Property \ref{devEIR2}]
Using the expansion of $J_j(a,m)$, $j=4,6$, for $a>1$ (see Lemma \ref{lemma46}) and the same computations than in Property \ref{devEIR}, we obtain:
\begin{multline*}
-\frac {2} {C_{41}^2(-2d)} \, \Big [\big (C_{62}(-2d)C_{41}(-2d)-C_{42}(-2d)C_{61}(-2d)\big)+ \frac {c_1}{c_0}\big (C''_{61}(-2d+\beta)C_{41}(-2d)-C''_{41}(-2d+\beta)C_{61}(-2d)\big)     \\
+\frac {|c_2|}{c_0}\big (C''_{61}(-2d+\beta+\varepsilon)C_{41}(-2d)+C''_{41}(-2d+\beta+\varepsilon)C_{61}(-2d)\big) \Big ] \, m^{-2d-1}(1+o(1))\\
 \leq \frac {R_m}{V_m^2} -\rho(d) \leq \\
-\frac {2} {C_{41}^2(-2d)} \, \Big [\big (C_{62}(-2d)C_{41}(-2d)-C_{42}(-2d)C_{61}(-2d)\big)+ \frac {c_1}{c_0}\big (C''_{61}(-2d+\beta)C_{41}(-2d)-C''_{41}(-2d+\beta)C_{61}(-2d)\big)     \\
-\frac {|c_2|}{c_0}\big (C''_{61}(-2d+\beta+\varepsilon)C_{41}(-2d)+C''_{41}(-2d+\beta+\varepsilon)C_{61}(-2d)\big) \Big ] \, m^{-2d-1}(1+o(1)).
\end{multline*}
Now, denote
\begin{eqnarray*}
D_0(d)&:=& C_{62}(-2d)C_{41}(-2d)-C_{42}(-2d)C_{61}(-2d)=\frac {C_{42}(-2d)C_{41}(-2d)} {48(1-2^{-1+2d})} \, \big (2^{4+2d}-5-3^{2+2d} \big ) , \\
D_1(d,\beta)&:=&C_{62}(-2d+\beta)C_{41}(-2d)-C_{42}(-2d+\beta)C_{61}(-2d)=\frac {C_{42}(-2d+\beta)C_{41}(-2d)} {128(1-2^{-1+2d})} \, \big (2^{4+2d}-5-3^{2+2d} \big ), \\
D_2(d,\beta,\varepsilon)&:=&C''_{61}(-2d+\beta+\varepsilon)C_{41}(-2d)+C''_{41}(-2d+\beta+\varepsilon)C_{61}(-2d) .
\end{eqnarray*}
Since $-0.5<d<0.5$, $2^{4+2d}-5-3^{2+2d}>0$ and $1-2^{-1+2d}>0$. Moreover, from the sign of the constants presented in Lemma \ref{lemma46}, we have $D_0(d)\neq0$ except for $d=0$, $D_1(d,\beta)\neq 0$ except for $d=2\beta$ and
$D_2(d,\beta,\varepsilon)>0$ for all $d \in (-0.5,0.5)$, $\beta>0$ and $\varepsilon>0$. %We deduce $D^2_0(d)+ D^2_1(d,\beta) \neq 0$ for all $d\in (-0.5,0.5)$ and $\beta >0$. 
Therefore, if $c_0, c_1, c_2,  d,\beta,\varepsilon$ are such that
\begin{eqnarray}
\label{condbis1} K_1&:=&D_0(d)+\frac {c_1}{c_0} D_1(d,\beta) - \frac {|c_2|}{c_0} D_2(d,\beta,\varepsilon)>0 \quad \\
\label{condbis2} \mbox{or}\quad K_2&:=&D_0(d)+\frac {c_1}{c_0} D_1(d,\beta) + \frac {|c_2|}{c_0} D_2(d,\beta,\varepsilon)<0.
\end{eqnarray}
and from the signs of $D_0(d)$, $D_1(d,\beta)$ and $D_2(d,\beta,\varepsilon)$, when $(d,\beta,\varepsilon)$ is fixed, these conditions are not impossible but hold following the values of $\frac {c_1}{c_0}$ and $\frac {|c_2|}{c_0} $. %%  the Lebesgue measure of the set $\big \{ (c_0,c_1,c_2)\in (-\infty)\times \R^2,~K_1K_2=0 \big \}$ is $0$. 
Then $\displaystyle \frac {R_m}{V_m^2} -\rho(d) \leq -\frac {K_1} {C_{41}^2(-2d)} \,  m^{-2d-1}$ or $\displaystyle \frac {R_m}{V_m^2} -\rho(d) \geq -\frac {K_2} {C_{41}^2(-2d)} \,  m^{-2d-1}$ for $m$ large enough following \eqref{condbis1} or \eqref{condbis2} holds. Then, if \eqref{condbis1} holds, since $\E [IR_N(m)]=\Lambda( \frac {R_m}{V_m^2})$, since the function $r\to \Lambda(r)$ %and $x\to \Lambda_0^{-1}(x)$ 
is an increasing and ${\cal C}^1$  function and since $\displaystyle \E [IR_N(m)]=\Lambda \big ( \frac {R_m}{V_m^2} \big ) $  then when $m$ large enough, from a Taylor expansion,
\begin{eqnarray*}
\E [IR_N(m)]\leq \Lambda \Big ( \rho(d) -\frac {K_1} {C_{41}^2(-2d)} \,  m^{-2d-1} \Big ) & \Longrightarrow & \E [IR_N(m)]\leq \Lambda_0(d)-\frac 1 2 \Lambda'(\rho(d))\frac {K_1} {C_{41}^2(-2d)} \,  m^{-2d-1}. %\\
%&\Longrightarrow &\Lambda_0^{-1} \big (\E [IR_N(m)] \big )\leq   d-\frac 1 2 (\Lambda_0(d))^{-1} \frac {K_1 \, \Lambda'(\rho(d))} {\Lambda_0(d)C_{41}^2(-2d)} \,  m^{-2d-1}.% \\
% \mbox{or}\quad \E IR_N(m)&\geq & \Lambda \Big ( \rho(d) -\frac {K_2} {C_{41}^2(-2d)} \,  m^{-2d-1} \Big ) .
\end{eqnarray*}
Now following the same process if \eqref{condbis2} holds, we deduce inequality
(\ref{EIRineg}).
\end{proof}

\begin{proof}[Proof of Proposition \ref{hatalpha}] Let $\varepsilon>0$ be a fixed
positive real number, such that $\alpha^*+\varepsilon<1$. \\
~\\
{\bf I.} First, a bound of $\Pr(\widehat \alpha_N \leq
\alpha^*+\varepsilon)$ is provided. Indeed,
\begin{eqnarray}
\nonumber \Pr\big (\widehat \alpha_N \leq \alpha^*+\varepsilon \big
) & \geq & \Pr\Big (\widehat Q_N(\alpha^*+\varepsilon/2) \leq
\min_{\alpha \geq \alpha^*+\varepsilon~\mbox{and}~\alpha \in {\cal
A}_N}\widehat Q_N(\alpha)\Big ) \\
\nonumber & \geq & 1 - \Pr\Big (\bigcup_{\alpha \geq
\alpha^*+\varepsilon~\mbox{and}~\alpha \in {\cal A}_N}\widehat
Q_N(\alpha^*+\varepsilon/2) > \widehat Q_N(\alpha)\Big )  \\
 \label{QN} & \geq &1 -  \sum_{k =[(\alpha^*+\varepsilon)\log N]}^{\log
[N/p]}\Pr\Big (\widehat Q_N(\alpha^*+\varepsilon/2) > \widehat
Q_N\big (\frac {k}{\log N} \big)\Big ).
\end{eqnarray}
But, for $\alpha\geq \alpha^*+\varepsilon$,
\begin{multline*}
\Pr\Big (\widehat Q_N(\alpha^*+\varepsilon/2) > \widehat
Q_N(\alpha)\Big ) \\=
\Pr \Big (\big \| \big ( \widehat d_N(i\, N^{\alpha^*+\varepsilon/2})\big )_{1\leq i \leq p} - \widetilde d_N(N^{\alpha^*+\varepsilon/2})\big \|^2 _{\widehat \Sigma_N(N^{\alpha^*+\varepsilon/2})} > \big \|  \big ( \widehat d_N(i\, N^{\alpha})- \widetilde d_N(N^{\alpha})\big )_{1\leq i \leq p}\big \|^2 _{\widehat \Sigma_N(N^{\alpha}) }\Big )
%\Pr\Big (\Big \| P_N(\alpha^*+\varepsilon/2) \cdot  Y_N(\alpha^*+\varepsilon/2) \Big \|^2  > \Big \|P_N(\alpha) \cdot Y_N(\alpha)\Big \|^2 \Big )
\end{multline*}
with $\| X\|^2_\Omega=X'\, \Omega^{-1} \, X$.  Set $Z_N(\alpha)=\frac N  {N^\alpha} \, \big \|  \big ( \widehat d_N(i\, N^{\alpha})\big )_{1\leq i \leq p}- \widetilde d_N(N^{\alpha})\big \|^2 _{\widehat \Sigma_N(N^{\alpha}) }$. Then,
\begin{eqnarray*}
\Pr\Big (\widehat Q_N(\alpha^*+\varepsilon/2) > \widehat
Q_N(\alpha)\Big )%&=&
%\Pr \Big (\frac   {N^{\alpha^*+\varepsilon/2}} N \, Z_N(\alpha^*+\varepsilon/2) > \frac   {N^{\alpha}} N \, Z_N(\alpha)\Big )\\
& = & \Pr \Big ( Z_N(\alpha^*+\varepsilon/2) >N^{\alpha-(\alpha^*+\varepsilon/2)}\, Z_N(\alpha)\Big )
\\
&\leq & \Pr \Big ( Z_N(\alpha^*+\varepsilon/2) >N^{(\alpha-(\alpha^*+\varepsilon/2))/2}\Big )+\Pr \Big ( Z_N(\alpha)< N^{-(\alpha-(\alpha^*+\varepsilon/2))/2}\Big ).
\end{eqnarray*}
From Proposition \ref{testnada}, for all $\alpha >\alpha^*$, $Z_N(\alpha) \limiteloiN \chi^2(p-1)$. As a consequence, for  $N$ large enough,
\begin{eqnarray*}
\Pr\left ( Z_N(\alpha) \leq
N^{-(\alpha-(\alpha^*+\varepsilon/2))/2} \right) 
& \leq & \frac 2 {2^{(p-1)/2}\Gamma((p-1)/2)}\cdot N^{-(\frac {p-1} 2)\frac {
(\alpha-(\alpha^*+\varepsilon/2))} 2}.
\end{eqnarray*}
Moreover, from Markov inequality and with $N$ large enough,
\begin{eqnarray*}
\Pr\left ( Z_N(\alpha^*+\varepsilon/2)
> N^{(\alpha-(\alpha^*+\varepsilon/2))/2} \right) & \leq & 2 \, \Pr\left (
\exp(
\sqrt{\chi^2(p-1}) > \exp \big ( N^{(\alpha-(\alpha^*+\varepsilon/2))/4}\big ) \right)\\
& \leq & 2 \, \E (\exp( \sqrt{\chi^2(p-1})) \, \exp \big (-
N^{(\alpha-(\alpha^*+\varepsilon/2))/4}\big ).
\end{eqnarray*}
We deduce that there exists $M_1>0$
not depending on $N$, such that for large enough $N$,
\begin{eqnarray*}
\Pr\Big (\widehat Q_N(\alpha^*+\varepsilon/2) > \widehat
Q_N(\alpha)\Big )  \leq M_1 \, \exp \big (-
N^{(\alpha-(\alpha^*+\varepsilon/2))/4}\big ).
\end{eqnarray*}
since $\E (\exp(
\sqrt{\chi^2(p-1}))<\infty$ does not depend on $N$.
Thus, the inequality (\ref{QN}) becomes, with $M_2 >0$ and for $N$ large
enough,
\begin{eqnarray}
\nonumber \Pr\big (\widehat \alpha_N \leq \alpha^*+\varepsilon \big
) & \geq &1 - M_1\,e^{-N^{\varepsilon/8}} \sum_{k =0}^{\log
[N/p]-[(\alpha^*+\varepsilon)\log N]} \exp \big (-
N^{\frac {k}{4\log N}}\big ) \\
\label{borne1} & \geq &1 - M_2 \, e^{-N^{\varepsilon/8}}.
\end{eqnarray}
{\bf II.} Secondly, a bound of $\Pr(\widehat \alpha_N \geq
\alpha^*-\varepsilon)$ can also be computed. Following the previous arguments
and notations,
\begin{eqnarray}
\nonumber \Pr\big (\widehat \alpha_N \geq \alpha^*-\varepsilon \big
) & \geq & \Pr\Big (\widehat Q_N(\alpha^*+\frac {1-\alpha^*} {2 \alpha^*}
\varepsilon) \leq \min_{\alpha \leq
\alpha^*-\varepsilon~\mbox{and}~\alpha \in {\cal
A}_N}\widehat Q_N(\alpha)\Big ) \\
\label{QN2} & \geq &1 -  \sum_{k =2}^{[(\alpha^*-\varepsilon) \log
N]+1}\Pr\Big (\widehat Q_N(\alpha^* +\frac {1-\alpha^*} {2 \alpha^*}
\varepsilon) > \widehat Q_N\big (\frac {k}{\log N} \big)\Big ),
\end{eqnarray}
and as above, with $Z_N(\alpha)=\frac N  {N^\alpha} \, \big \|  \big ( \widehat d_N(i\, N^{\alpha})-\widetilde d_N(N^{\alpha})\big )_{1\leq i \leq p} \big \|^2 _{\widehat \Sigma_N(N^{\alpha}) }$,
\begin{eqnarray}
\label{TLC7}  \Pr\Big (\widehat Q_N(\alpha^*+\frac {1-\alpha^*} {2 \alpha^*}
\varepsilon) > \widehat Q_N(\alpha)\Big ) =  \Pr\Big ( Z_N(\alpha^*+\frac {1-\alpha^*} {2 \alpha^*}
\varepsilon)
> N^{\alpha-(\alpha^*+\frac {1-\alpha^*} {2 \alpha^*}
\varepsilon)} Z_N(\alpha) \Big ).
\end{eqnarray}
$\bullet$ if $\beta\leq 2d+1$, with $\alpha <\alpha^*=(1+2\beta)^{-1}$, from Property \ref{devEIR} and with $C \neq 0$, for $1\leq i \leq p$,
\begin{multline}\label{expan}
\sqrt{ \frac N {N^\alpha}} \, \big (\E \big [IR(i\, N^\alpha)\big ]-\Lambda_0(d)\big ) \simeq  C\,i^{-(1-\alpha^*)/2\alpha^*} \,  N^{(\alpha^*-\alpha)/2\alpha^*} (\log N)^{\1_{\beta=2d+1}} \\
\Longrightarrow \sqrt{ \frac N {N^\alpha}} \, \big( \Lambda_0^{-1}(\E \big [IR(i\, N^\alpha)\big ])-d \big ) \simeq  C'\,i^{-(1-\alpha^*)/2\alpha^*} \,  N^{(\alpha^*-\alpha)/2\alpha^*} (\log N)^{\1_{\beta=2d+1}} 
\end{multline}
with $C'\neq 0$, since $\Lambda_0(d)>0$ for all $d\in (-0.5,0.5)$. We deduce:
\begin{eqnarray*}
 && \sqrt{ \frac N {N^\alpha}} \, \Big (  \widehat d_N(i\, N^{\alpha})-d\Big )_{1\leq i \leq p} \simeq  C''\,   N^{(\alpha^*-\alpha)/2\alpha^*} (\log N)^{\1_{\beta=2d+1}}\big (i^{-(1-\alpha^*)/2\alpha^*} \big)_{1\leq i\leq p}+ \big ( \widehat  \varepsilon_N(i\, N^\alpha)\big )_{1\leq i\leq p}, 
%\\
%&& \sqrt{ \frac N {N^\alpha}} \, \Big (  \widetilde d_N(N^{\alpha})-d\Big ) \simeq  C''\,   N^{(\alpha^*-\alpha)/2\alpha^*} (\log N)^{\1_{\beta=2d+1}}\big (i^{-(1-\alpha^*)/2\alpha^*} \big)_{1\leq i\leq p}+ \big ( \widetilde  \varepsilon_N(i\, N^\alpha)\big )_{1\leq i\leq p}
\end{eqnarray*}
with $C''\neq 0$ and $\big ( \widehat  \varepsilon_N(i\, N^\alpha)\big )_{1\leq i\leq p} \limiteloiN {\cal N}\Big(0, (\Lambda'_0(d))^{-2}\, \Gamma_p(d)\Big )$ 
from Proposition \ref{cltnada}. Now from the definition of $\widetilde d_N(N^{\alpha})$, we have $\big ( \widehat d_N(i\, N^{\alpha})-\widetilde d_N(N^{\alpha})\big )_{1\leq i \leq p}=\widehat M_N \big ( \widehat d_N(i\, N^{\alpha})-d\big )_{1\leq i \leq p}$ with $\widehat M_N$ the orthogonal  projector matrix on $(1)^{\perp}_{1\leq i \leq p}$. \\
%\begin{multline*}
%\quad \sqrt{ \frac N {N^\alpha}} \, \big (\widetilde d_N( N^\alpha)-d\big )\simeq C'\, N^{\frac {\alpha^*-\alpha}{2\alpha^*}} (\log N)^{\1_{\beta=2d+1}} \, \big (J'_p  \widehat \Sigma^{-1} _N(N^\alpha) J_p \big )^{-1}J'_p  \widehat \Sigma^{-1} _N(N^\alpha)  \big (i^{-\frac {1-\alpha^*}{2\alpha^*}}\big )_{1\leq i\leq p} +\big ( \widetilde \varepsilon_N(i\, N^\alpha)\big )_{1\leq i\leq p}
%\end{multline*}
%with $C\neq 0$, $C'=C\, (\Lambda_0'(d))^{-1} \neq 0$, and using Proposition \ref{cltnada}, $\big (\widetilde \varepsilon_N(i\, N^\alpha)\big )_{1\leq i\leq p} \limiteloiN {\cal N}\Big(0, (\Lambda'_0(d))^{-2}\,\big ( J_p' \Gamma^{-1}_p(d)J_p \big)^{-1}\Big )$. \\
As a consequence, for $\alpha<\alpha^*-\varepsilon$ and with the inequality $\|a-b\|^2\geq \frac 1 2 \|a \|^2-\|b\|^2$, 
\begin{eqnarray*}
\nonumber Z_N(\alpha) &\geq &\frac 1 2  (C'')^2\,  N^{\frac {\alpha^*-\alpha}{\alpha^*}} (\log^2 N)^{\1_{\beta=2d+1}}  \Big \| \widehat M_N \big (i^{-\frac {1-\alpha^*}{2\alpha^*}}\big )_{1\leq i\leq p} \Big \|_{\widehat \Sigma_N(N^\alpha)} - \|\widehat M_N  \widehat \varepsilon_N(i\, N^\alpha))\|^2_{\widehat \Sigma_N(N^\alpha)}.
\end{eqnarray*}
Now, it is clear that $ \|\widehat M_N  \widehat \varepsilon_N(i\, N^\alpha))\|^2_{\widehat \Sigma_N(N^\alpha)} \leq \|  \widehat \varepsilon_N(i\, N^\alpha))\|^2_{\widehat \Sigma_N(N^\alpha)} \leq C_1$ when $N$ large enough, with $C_1>0$ not depending on $N$. Moreover the vector $\big (i^{-\frac {1-\alpha^*}{2\alpha^*}}\big )_{1\leq i\leq p} $
is not in the  subspace $(1)_{1\leq i \leq p}$ and therefore $\Big \| \widehat M_N \big (i^{-\frac {1-\alpha^*}{2\alpha^*}}\big )_{1\leq i\leq p} \Big \|_{\widehat \Sigma_N(N^\alpha)}\geq C_2$ for $N$ large enough with $C_2>0$. We deduce that there exists $D>0$ such that for $N$ large enough and $\alpha<\alpha^*-\varepsilon$,
\begin{eqnarray}\label{egal}
\nonumber Z_N(\alpha) &\geq & D \,   N^{\frac {\alpha^*-\alpha}{\alpha^*}} (\log^2 N)^{\1_{\beta=2d+1}}.
\end{eqnarray}
Therefore, since $ N^{\frac {\alpha^*-\alpha}{\alpha^*}} \limiteN \infty $ when $\alpha<\alpha^*-\varepsilon$,
$$
\Pr \Big ( Z_N(\alpha) \geq \frac 1 2  D \,   N^{\frac {\alpha^*-\alpha}{\alpha^*}}  \Big ) \limiteN 1.
$$
Then, the relation (\ref{TLC7}) becomes for $\alpha<\alpha^*-\varepsilon$  and $N$ large enough,
\begin{eqnarray*}
\Pr\Big (\widehat Q_N(\alpha^*+\frac {1-\alpha^*} {2 \alpha^*}
\varepsilon) > \widehat Q_N(\alpha)\Big ) &\leq & \Pr\Big (
\chi^2 (p-1) \geq \big (\frac 1 2\,  D \,   N^{\frac {\alpha^*-\alpha}{\alpha^*}}  \big ) \, N^{\alpha-(\alpha^*+\frac {1-\alpha^*} {2 \alpha^*}
\varepsilon)}\Big ) \\
&\leq & \Pr\Big (
\chi^2 (p-1) \geq  \frac D 2  \, N^{\frac {1-\alpha^*} {2 \alpha^*} (2(\alpha^*-\alpha)-
\varepsilon)}\Big )  \\
& \leq & M_2 \, N^{-(\frac {p-1} 2)\frac {1-\alpha^*} {2
\alpha^*}\, \varepsilon},
\end{eqnarray*}
with $M_2 >0$, because
$\frac {1-\alpha^*} {2 \alpha^*}(2(\alpha^*-\alpha)-
\varepsilon)\geq \frac {1-\alpha^*} {2 \alpha^*}\varepsilon$ for
all $\alpha \leq \alpha^*-\varepsilon$. Hence, from the inequality
(\ref{QN2}), for large enough $N$,
\begin{eqnarray}
\label{borne2} \Pr\big (\widehat \alpha_N \geq \alpha^*-\varepsilon
\big ) \geq 1 -M_2 \, \log N \, N^{-(p-1)\frac
{1-\alpha^*} { 4\alpha^*}\, \varepsilon}.
\end{eqnarray}
$\bullet$ if $\beta > 2d+1$, with $\alpha<\alpha^*=(4d+3)^{-1}$ and from Property \ref{devEIR2}, we obtain an inequality instead of (\ref{expan}): 
$$
\Big | \Lambda_0^{-1} \big (\E [IR_N(m)] \big )-   d\Big |\geq \frac 1 2 (\Lambda_0(d))^{-1} L \,  m^{-2d-1}
$$
since the function $x \mapsto \Lambda_0^{-1}(x)$ is an increasing an ${\cal C}^1$ function, using a Taylor expansion. Therefore for $1\leq i \leq p$,
\begin{eqnarray}\label{inegal}
\sqrt{ \frac N {N^\alpha}} \, \Big |\Lambda_0^{-1} \big ( \E \big [IR(i\, N^\alpha)\big ]\big )-d\Big | \geq  \frac 1 2 (\Lambda_0(d))^{-1} L\,i^{-(1-\alpha^*)/2\alpha^*} \,  N^{(\alpha^*-\alpha)/2\alpha^*}.
\end{eqnarray}
Now, as previously and with the same notation, 
\begin{multline}\label{expan2}
\big ( \widehat d_N(i\, N^{\alpha})-\widetilde d_N(N^{\alpha})\big )_{1\leq i \leq p}\simeq    \widehat M_n  \big (\Lambda_0^{-1} \Big ( \E \big [IR(i\, N^\alpha)\big ]\big )-d \Big ) _{1\leq i\leq p}
+\widehat M_n  \big ( \widehat \varepsilon_N(i\, N^\alpha)\big )_{1\leq i\leq p}.
\end{multline}
Now plugging \eqref{inegal} in \eqref{expan2} and following the same steps of the proof in the case $\beta\leq 2d+1$, the same kind of bound (\ref{borne2}) can be obtained. \\
~\\
Finally, the inequalities (\ref{borne1}) and (\ref{borne2}) imply that
$\displaystyle{~\Pr\big (|\widehat \alpha_N - \alpha^*| \geq
\varepsilon \big ) \limiteN 0}$.
\end{proof}

\begin{proof}[Proof of Theorem \ref{tildeD}]
The results of Theorem \ref{tildeD} can be easily deduced from Theorem \ref{cltnada} and Proposition \ref{hatalpha} (and its proof) by using conditional probabilities.
\end{proof}
\begin{proof}[Proof of Proposition \ref{testada}]
{Proposition} {\ref{testada}} can be deduced from {Theorem} {\ref{tildeD}} using the same kind of proof than in Proposition \ref{testnada} and conditional distributions.
\end{proof}

\begin{lemma}\label{lemma46}
For $j=4,6$, denote
\begin{equation}\label{Jj}
J_j(a,m):=\int_{0}^{\pi}x^{a}\frac{\sin^{j}(\frac{mx}{2})}{\sin^{2}(\frac{x}{2})}dx.
\end{equation}
Then, we have the following expansion when $m \to \infty$:
\begin{eqnarray*}
1.& \mbox{if $-1<a<1$,} &
J_j(a,m)=C_{j1}(a)\,m^{1-a}+C_{j2}(a)+O\big (m^{-1-(a\wedge0)}\big); \\
2. &\mbox{if $a=1$,} &
J_j(a,m)=C_{j1}'\,\log(m)+C_{j2}'+O\big (m^{-1}\big); \\
3. &\mbox{if $a>1$,} &
J_j(a,m)=C_{j1}''(a)+O\big (m^{1-a}+m^{-2} \big),
\end{eqnarray*}
where constants $C_{j1}(a)$, $C_{j2}(a)$, $C'_{j1}(a)$, $C'_{j2}(a)$ and $C''_{j1}(a)$ are specified in the following proof.
\end{lemma}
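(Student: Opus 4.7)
The strategy is to split the kernel via the identity
\[
\frac{1}{\sin^2(x/2)} = \frac{4}{x^2} + h(x),
\]
where $h(x):=1/\sin^2(x/2)-4/x^2$ extends to a ${\cal C}^\infty$ function on $[0,\pi]$ (the singularity at $0$ cancels). This gives $J_j(a,m) = 4\,A_j(a,m) + B_j(a,m)$ with $A_j(a,m)=\int_0^\pi x^{a-2}\sin^j(mx/2)\,dx$ and $B_j(a,m)=\int_0^\pi x^a h(x)\sin^j(mx/2)\,dx$. I would then replace $\sin^j(mx/2)$ with its explicit trigonometric expansion: for $j=4$, $\sin^4(y)=\tfrac{3}{8}-\tfrac{1}{2}\cos(2y)+\tfrac{1}{8}\cos(4y)$, and for $j=6$, $\sin^6(y)=\tfrac{5}{16}-\tfrac{15}{32}\cos(2y)+\tfrac{3}{16}\cos(4y)-\tfrac{1}{32}\cos(6y)$, abbreviated as $\sin^j(y)=b_{j,0}+\sum_{k\ge 1}b_{j,k}\cos(ky)$ with $b_{j,0}>0$ and a finite sum.

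For $A_j(a,m)$, performing the change of variable $u=mx/2$ yields $A_j(a,m)=2^{a-1}m^{1-a}\int_0^{m\pi/2}u^{a-2}\sin^j(u)\,du$, and I would analyze the inner integral according to three cases. If $-1<a<1$, the improper integral $I_j(a):=\int_0^\infty u^{a-2}\sin^j(u)\,du$ converges (near $0$ because $\sin^j(u)\sim u^j$ with $j\ge 4$, and at infinity because $a<1$); the tail $\int_{m\pi/2}^\infty u^{a-2}\sin^j(u)\,du$ is computed term-by-term using the Fourier expansion, the constant part giving $b_{j,0}(m\pi/2)^{a-1}/(1-a)$ and the oscillatory part giving $O(m^{a-2})$ after one integration by parts. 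If $a=1$, the constant part of the expansion yields $b_{j,0}\log(m\pi/2)$ while the oscillatory parts converge. If $a>1$, the constant part dominates as $b_{j,0}(m\pi/2)^{a-1}/(a-1)$ and the oscillatory parts are $C_{a,k}+O(m^{a-2})$ with $C_{a,k}:=\int_0^\infty u^{a-2}\cos(ku)\,du$ (convergent as an improper integral for $1<a<2$, handled for $a\ge 2$ by a second integration by parts after splitting $[0,1]\cup[1,m\pi/2]$). Multiplying back by $2^{a-1}m^{1-a}$ recovers the leading terms $C_{j1}(a)m^{1-a}$, $C'_{j1}\log m$, and the constant $C''_{j1}(a)$, with explicit formulas such as $C_{j1}(a)=2^{a+1}I_j(a)$ and $C'_{j1}=4b_{j,0}$.

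For $B_j(a,m)$ the same Fourier expansion gives $B_j(a,m)=b_{j,0}\int_0^\pi x^a h(x)\,dx+\sum_k b_{j,k}\int_0^\pi x^a h(x)\cos(kmx)\,dx$, the first summand being an $m$-independent constant absorbed into $C_{j2}(a)$, $C'_{j2}$, or $C''_{j1}(a)$. Each oscillatory integral is controlled by integration by parts: for $a\ge 0$ one integration by parts yields $O(m^{-1})$; for $-1<a<0$ I would split at $x=1/m$, estimating directly on $[0,1/m]$ (giving $O(m^{-1-a})$) and integrating by parts on $[1/m,\pi]$; for $a>1$ the function $x\mapsto x^a h(x)$ vanishes at $0$ and is ${\cal C}^1$, so two integrations by parts give $O(m^{-2})$, using that in the applications $m$ is a positive integer so $\sin(km\pi)=0$ kills the boundary term at $\pi$ after the first integration by parts. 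The main obstacle is the error book-keeping in the intermediate range $1<a<2$, where $u^{a-3}$ fails to be integrable at $0$ and one must preserve the splitting $[0,1]\cup[1,m\pi/2]$ throughout rather than naively iterating integration by parts; together with the transition at $a=1$ (where the leading $m^{1-a}$ degenerates into a logarithm), this is the delicate piece of book-keeping that produces the stated remainders $O(m^{-1-(a\wedge 0)})$, $O(m^{-1})$, and $O(m^{1-a}+m^{-2})$.
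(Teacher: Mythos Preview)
Your proposal is correct and follows essentially the same route as the paper: the same split $\sin^{-2}(x/2)=4/x^2+h(x)$ with $h\in{\cal C}^\infty([0,\pi])$, the same Fourier linearization of $\sin^j$, and the same use of integration by parts together with $\sin(km\pi)=0$ for integer $m$ to kill boundary terms. The only organizational difference is in Case~3 ($a>1$): the paper does \emph{not} keep the $A_j/B_j$ split there but instead linearizes $\sin^j(mx/2)$ directly against the full kernel $x^a/\sin^2(x/2)$ (which is already integrable at $0$ when $a>1$), reading off $C''_{j1}(a)=b_{j,0}\int_0^\pi x^a/\sin^2(x/2)\,dx$ in one line and handling the oscillatory remainder by two integrations by parts on the smooth-away-from-$0$ function $x^a/\sin^2(x/2)$. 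Your route recovers the same constant as the sum of the $A_j$ and $B_j$ contributions, and your $O(m^{1-a}+m^{-2})$ remainder is also attainable (your stated intermediate bound $O(m^{a-2})$ for the oscillatory inner integral is a slight overestimate --- one more integration by parts gives $O(m^{a-3})$, which after multiplication by $m^{1-a}$ yields the required $O(m^{-2})$). So the two arguments are variants of the same computation; the paper's choice in Case~3 is marginally cleaner for extracting the constant, while yours has the virtue of treating all three cases uniformly.
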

\begin{proof}[Proof of Lemma \ref{lemma46}]
1. let $-1<a<1$. \\
We begin with the expansion of $\displaystyle J_4(a,m)$. First, decompose $J_4(a,m)$ as follows
\begin{eqnarray} \label{Jam}
J_4(a,m)=2^{a+1}\int_{0}^{\frac{\pi}{2}}y^{a}\sin^{4}(my)\Big [\frac{1}{\sin^{2}(y)}-\frac{1}{y^{2}}\Big ]dy+\int_{0}^{\pi}\frac {x^{a}}{(\frac{x}{2})^{2}} \sin^{4}(\frac{mx}{2})dx.
%\\ \mbox{with}\quad J_{0}(a,m):=2^{a+1}\int_{0}^{\frac{\pi}{2}}y^{a}\sin^{4}(my)\Big [\frac{1}{\sin^{2}(y)}-\frac{1}{y^{2}}\Big ]dy.
\end{eqnarray}
Using integrations by parts and $\sin^{4}(\frac{x}{2})=\sin^{2}(\frac{x}{2})-\frac{1}{4}\sin^{2}(x)= \frac 1 8 \big ( 3-4\cos(y)+\cos(2y) \big )$, we obtain for $m\to \infty$:
\begin{eqnarray*}
\int_{0}^{\pi}\frac {x^{a}}{(\frac{x}{2})^{2}} \sin^{4}(\frac{mx}{2})dx&=& 4\,m^{1-a} \Big ( (1-\frac{1}{2^{1+a}})\int_{0}^{\infty}\frac{\sin^{2}(\frac{y}{2})}{y^{2(\frac{1-a}{2})+1}}dy- \frac 1 8 \int_{m\pi}^{\infty}y^{a-2}\big ( 3-4\cos(y)+\cos(2y) \big )dy \Big ) \\
& =& \frac{\pi(1-\frac{1}{2^{1+a}})}{(1-a)\Gamma(1-a)\sin(\frac{(1-a)\pi}{2})} \,m^{1-a} - 3 \, \frac{1}{2(1-a)}\pi^{a-1} + O(m^{-1})
\end{eqnarray*}
where the left right side term of the last relation is obtained by integration by parts and the left side term is deduced from the following relation  (see Doukhan {\it et al.} 2003, p. 31)
\begin{eqnarray}\label{taqqu}
\int_{0}^{\infty}y^{-\alpha}~\sin(y)~dy~=~\frac{1}{2}~\frac{\pi}{\Gamma(\alpha)\sin(\pi (\frac{\alpha}{2}))}\quad \mbox{for $0<\alpha<2$.}
\end{eqnarray}
Moreover, with the linearization of $\sin^4 u$ and Taylor expansions $
\frac{1}{\sin^{2}(y)}-\frac{1}{y^{2}}\underset{y\rightarrow 0}{~\sim~}~\frac{1}{3}\quad\mbox{and}\quad
\frac{1}{y^{3}}-\frac{\cos(y)}{\sin^{3}(y)}~\underset{y\rightarrow 0}{\sim}~\frac{y}{15}
$,
\begin{eqnarray}\label{J0e}
2^{a+1}\int_{0}^{\frac{\pi}{2}}y^{a}\sin^{4}(my)\Big [\frac{1}{\sin^{2}(y)}-\frac{1}{y^{2}}\Big ]dy=~3~\frac{2^{a+1}}{8} \int_{0}^{\frac{\pi}{2}}y^{a}[\frac{1}{\sin^{2}(y)}-\frac{1}{y^{2}}]dy+O\big (m^{-1-(a\wedge 0)}\big ).
\end{eqnarray}
Finally, by replacing this expansion in (\ref{Jam}), one deduces
\begin{multline}\label{Vm4}
J_4(a,m)=\int_{0}^{\pi}x^{a}\frac{\sin^{4}(\frac{mx}{2})}{\sin^{2}(\frac{x}{2})}dx=C_{41}(a)\,m^{1-a}+C_{42}(a)+O\big (m^{-1-(a\wedge0)}\big)\quad (m\to \infty),
\mbox{with}\\~ C_{41}(a):=\frac{\pi(1-\frac{1}{2^{1+a}})}{(1-a)\Gamma(1-a)\sin(\frac{(1-a)\pi}{2})}~~\mbox{and}~C_{42}(a):=  \frac{3}{2^{2-a}}\, \int_{0}^{\frac{\pi}{2}}y^{a}[\frac{1}{\sin^{2}(y)}-\frac{1}{y^{2}}]dy-\frac{3}{2(1-a)}\pi^{a-1}.
\end{multline}
Note that $C_{41}(a)> 0$ and $C_{42}(a)< 0$ for all $0<a<1$, $C_{42}(a)> 0$ for all $-1<a<0$, $C_{42}(0)=0$.\\
~\\
A similar expansion procedure of $\displaystyle J_6(a,m)$ with $\sin^{6}(\frac{mx}{2})$ instead of $\sin^{4}(\frac{mx}{2})$ can be provided. 
As previously with  $\sin^{6}(\frac{y}{2})=\frac 1 {32 } \big (10-15\cos(y)+6\cos(2y)-\cos(3y)\big ) $, when $m\to \infty$,
\begin{multline*}
J_6(a,m)=C_{61}(a)\,m^{1-a}+C_{62}(a)+O\big (m^{-1-(a\wedge0)}\big),\\
\mbox{with}~ C_{61}(a):=\frac{\pi(15+3^{1-a}-2^{1-a}6)}{16(1-a)\Gamma(1-a)\sin(\frac \pi{2} (1-a))}~\mbox{and}~C_{62}(a):= \frac{5}{6} \, C_{42}(a).
\end{multline*}
Moreover it is clear that $C_{61}(a)> 0$.\\
~\\
2. let $a=1$. \\
When $m\to \infty$ we obtain the following expansion:
\begin{eqnarray*}
\int_{0}^{\pi}\frac{x\sin^{4}(\frac{mx}{2})}{\sin^{2}(\frac{x}{2})}dx
&=&\frac 1 2 \Big (\int_{0}^{m\pi}\frac{\sin (2x)-2x}{2x^2}dx-4\int_{0}^{m\pi}\frac{\sin (x)-x}{x^2}dx\Big ) +4\int_{0}^{\frac{\pi}{2}}y\sin^{4}(my)\Big(\frac{1}{\sin^{2}(y)}-\frac{1}{y^{2}}\Big)dy
\end{eqnarray*}
But,
$$
\int_{0}^{m\pi}\frac{\sin (2x)-2x}{2x^2}dx-4\int_{0}^{m\pi}\frac{\sin (x)-x}{x^2}dx=\frac 3 2 \Big ( \log(m\pi)+\int_1 ^\infty \frac {\sin y}{y^2}dy+\int_{0}^{1}\frac{\sin y-y}{y^2}dy \Big )+O(m^{-1}).
$$
Moreover from previous computations (see the case $a<1$),
$$
\int_{0}^{\frac{\pi}{2}}y\sin^{4}(my)\Big(\frac{1}{\sin^{2}(y)}-\frac{1}{y^{2}}\Big)dy=\frac 3 8 \, \int_{0}^{\frac{\pi}{2}}y\Big(\frac{1}{\sin^{2}(y)}-\frac{1}{y^{2}}\Big)dy+O(m^{-1}).
$$
As a consequence, when $m\to \infty$,
\begin{multline*}
\int_{0}^{\pi}\frac{x\sin^{4}(\frac{mx}{2})}{\sin^{2}(\frac{x}{2})}dx=C'_{41}\,\log(m)+C'_{42}+O\big (m^{-1}\big), \quad
\mbox{with} \quad C'_{41}:=\frac{3}{2}\quad \mbox{and}~\\
C'_{42}:= \frac{3}{2}\Big ( \log(\pi)+\int_{0}^{\frac{\pi}{2}}y\Big(\frac{1}{\sin^{2}(y)}-\frac{1}{y^{2}}\Big)dy+\int_1 ^\infty \frac {\sin y}{y^2}dy+\int_{0}^{1}\frac{\sin y-y}{y^2}dy \Big ).
\end{multline*}
Note that $C'_{41}>0$ and $C'_{42}\simeq 2.34 >0$. \\
~\\
In the same way , we obtain the following expansions when $m\to \infty$,
\begin{multline*}
\int_{0}^{\pi}\frac{x\sin^{6}(\frac{mx}{2})}{\sin^{2}(\frac{x}{2})}dx=C'_{61}\,\log(m)+C'_{62}+O\big (m^{-1}\big)\quad
\mbox{with} \quad C'_{61}:=\frac{5}{4}\quad \mbox{and}  \\
C'_{62}:= \frac{5}{4}\log(\pi)+ \frac{5}{4} \int_{0}^{\frac{\pi}{2}}y\Big(\frac{1}{\sin^{2}(y)}-\frac{1}{y^{2}}\Big)dy+\frac{1}{8}\int_{1}^{\infty}\frac{1}{y}\Big(-\cos (3y)+6\cos(2y)-15\cos(y)\Big)dy+4\int_{0}^{1}\frac{1}{y}\sin^{6}(\frac{y}{2})dy.
\end{multline*}
Note again that $C'_{61}>0$ and numerical experiments show that $C'_{62}> 0$. \\
~\\
3. Let $a>1$. Then, with the linearization of $\sin^4(u)$,
\begin{eqnarray}\label{linsin4}
\nonumber \int_{0}^{\pi}\frac{x^{a}\sin^{4}(\frac{mx}{2})}{\sin^{2}(\frac{x}{2})}dx&=& \frac{3}{8}\int_{0}^{\pi}\frac{x^{a}}{\sin^{2}(\frac{x}{2})}dx-\frac{1}{2}\int_{0}^{\pi}\frac{x^{a}}{\sin^{2}(\frac{x}{2})}\cos(mx)dx+\frac{1}{8}\int_{0}^{\pi}\frac{x^{a}}{\sin^{2}(\frac{x}{2})}\cos(2mx)dx \\
\label{cas>1} & =& C''_{41}(a)+\frac{1}{m}\int_{0}^{\pi}\Big(\frac{\sin(mx)}{2}-\frac{\sin(2mx)}{16}\Big)\Big(g(x)+h(x)\Big)dx,
\end{eqnarray}
with: $\displaystyle g(x)=\Big(\frac{ax^{a-1}}{\sin^{2}(\frac{x}{2})}-4ax^{a-3}\Big)-\Big(\frac{x^{a}\cos(\frac{x}{2})}{\sin^{3}(\frac{x}{2})}-8x^{a-3}\Big)$ and $ h(x)=(4a-8)x^{a-3}$. \\
~\\
First, if $1<a $, with an integration by parts,
\begin{eqnarray}
\frac{1}{m}\int_{0}^{\pi}\big(\frac{\sin(mx)}{2}-\frac{\sin(2mx)}{16}\big)h(x)dx 
\label{h13} &=&O\big(m^{1-a} + m^{-2}\big).
\end{eqnarray}
%If $a=3$, then $h(x)=4(a-2)$, a straightforward integration by parts is now possible and one easily obtains:
%\begin{eqnarray}
%\label{h3} I_m(h,a)&=& \frac{4(a-2)}{m^2}\Big [\big(-\frac{\cos(mx)}{2}+\frac{\cos(2mx)}{32}\big)\Big ]_0^\pi= 2\big (1 -(-1)^m \big ) \frac{1}{m^2}.
%\end{eqnarray}
Moreover,
\begin{multline*}
\frac{1}{m}\int_{0}^{\pi}\Big(\frac{\sin(mx)}{2}-\frac{\sin(2mx)}{16}\Big)g(x)dx 
\nonumber  \\= \big (\frac 1 {32} -\frac {(-1)^m} 2 \big) \big (a\pi^2-4a+8\big)\pi^{a-3}\frac{1}{m^2}-\frac{1}{m^{2}}\int_{0}^{\pi}\big(-\frac{\cos(mx)}{2}+\frac{\cos(2mx)}{32}\big)\, g'(x)dx
\end{multline*}
since $g(x)\underset{x=0^{+}}\sim\frac{a}{3}~x^{a-1}$ and
$g'(x)\underset{x=0^{+}}\sim\frac{a(a-1)}{3}x^{a-2}$. Therefore, if $1<a$,
\begin{eqnarray*}
\frac{1}{m}\int_{0}^{\pi}\Big(\frac{\sin(mx)}{2}-\frac{\sin(2mx)}{16}\Big)g(x)dx 
\label{g3} &=&O\big(m^{-2}\big).
\end{eqnarray*}
In conclusion, for $1<a$ we deduce,
\begin{multline*}\label{Vm4''1}
\int_{0}^{\pi}\frac{x^{a}\sin^{4}(\frac{mx}{2})}{\sin^{2}(\frac{x}{2})}dx=C''_{41}(a)+O\big(m^{1-a} + m^{-2}\big) \quad \mbox{with}\quad  C''_{41}(a):=\frac{3}{8}\int_{0}^{\pi}\frac{x^{a}}{\sin^{2}(\frac{x}{2})}dx>0.
\end{multline*}
Similarly, for $1<a$ we deduce,
\begin{multline*}%\label{Vm6''1}
\int_{0}^{\pi}\frac{x^{a}\sin^{6}(\frac{mx}{2})}{\sin^{2}(\frac{x}{2})}dx=C''_{61}(a)+O\big(m^{1-a} + m^{-2}\big) \quad
\mbox{with} \quad C''_{61}(a):=\frac{5}{16}\int_{0}^{\pi}\frac{x^{a}}{\sin^{2}(\frac{x}{2})}dx=\frac 5 6 \, C''_{41}(a)>0.
\end{multline*}
\end{proof}
\medskip
\noindent
{\bf Acknowledgments.} The authors are grateful to both the referees for their very careful reading and many relevant
suggestions and corrections that strongly improve the content and
the form of the paper.

\bibliographystyle{amsalpha}

\end{document}